\newtheorem{thm}{Theorem}[section]
\newtheorem{cor}[thm]{Corollary}
\newtheorem{lem}[thm]{Lemma}
\newtheorem{prop}[thm]{Proposition}
\newtheorem{ex}[thm]{Example}
\theoremstyle{definition}
\newtheorem{defn}[thm]{Definition}
\theoremstyle{remark}
\newtheorem{rem}[thm]{Remark}
\numberwithin{equation}{section}
\begin{document}

\title[]{on partial and generic uniqueness of block term tensor decompositions}
\author{Ming Yang}
\address{Mathematics Department of Texas A\&M University}
\email{mingyang@math.tamu.edu}
\thanks{}%
\subjclass{classical algebraic geometry, multilinear algebra}%
\keywords{subspace variety, join and secant variety, block term decomposition}%

\begin{abstract}
We present several conditions for generic uniqueness of tensor
decompositions of multilinear
rank $(1,\ L_{1},\ L_{1}),\cdots,(1,\ L_{R},\ L_{R})$ terms.
In geometric language, we
prove that the joins of relevant subspace
varieties are not tangentially weakly defective. We also give conditions for partial uniqueness of
block term tensor decompositions by proving that the joins
of relevant subspace varieties are not defective.
\end{abstract}
\maketitle{}
\section{Introduction and Main results}
Recently, De Lathauwer (\cite{1LL,Lat 1,Lat 2,Lat 3}) introduced
the concept of block term tensor decompositions, because it is natural for certain
source separation problems in signal processing, and often has
better uniqueness properties than decompositions by tensor rank. We refer the reader to \cite{Lat 4} for the applications of block
term decomposition in blind source separation. Therefore, the study of the uniqueness property of this kind of tensor decompositions is of interest. The results of this paper, mainly concerning the uniqueness property, extend the range of applicability of block term tensor decompositions. To present these results, we first recall the definition of a block term decomposition of a tensor. Throughout this paper, for basic definitions, notation and results, we follow \cite{La}, which is addressed to both the numerical and the algebraic geometrical research communities.

Let $A_{j}$, $1\leq j\leq n$, be finite dimensional complex vector spaces. The elements of $A_{1}\otimes\ldots\otimes A_{n}$ are called $n$-tensors. When no confusion can occur, they are simply called tensors. For tensors there are several different notions of rank that we review below.

For $\beta_{j}\in A^{*}_{j}$, $1\leq j\leq n$, where $A^{*}_{j}$ is the dual space of $A_{j}$, let an element $\beta_{1}\otimes\ldots\otimes \beta_{n}$ denote the unique element in $A^{*}_{1}\otimes\ldots\otimes A^{*}_{n}$ determined by
the condition
\begin{align}\label{rank 1}
(\beta_{1}\otimes\ldots\otimes \beta_{n})\vdash
(v_{1},\cdots,v_{n}):=\beta_{1}(v_{1})\cdots \beta_{n}(v_{n}),\ v_{j}\in A_{j}.
\end{align}
An element in $A^{*}_{1}\otimes\ldots\otimes A^{*}_{n}$ is said to have \textit{rank}
one if it can be written as in \eqref{rank 1}. Using the obvious reflexity $(A^{*}_{1}\otimes\ldots\otimes A^{*}_{n})^{*}=A_{1}\otimes\ldots\otimes A_{n}$, a rank one tensor in $A_{1}\otimes\ldots\otimes A_{n}$ is defined similarly.
\begin{defn}
Define the \textit{rank} of a tensor $T\in A_{1}\otimes\ldots\otimes A_{n}$, denoted
$\mathbf{R}(T)$, to be the minimal number $r$ such that
\begin{align}
T=\xi_{1}+\cdots+\xi_{r},
\end{align}
with each $\xi_{j}$ of rank one.
\end{defn}
\begin{defn}\label{tensor hom}
When studying tensors in $A_{1}\otimes\cdots\otimes A_{n}$, it is convenient to introduce the notation $\hat{A}_{\hat{\jmath}}:=
A_{1}\otimes\cdots A_{j-1} \otimes A_{j+1}\otimes A_{n}$. Also, given $T \in A_{1}\otimes\cdots\otimes A_{n}$, it canonically defines a linear map $A^{*}_{j}\rightarrow \hat{A}_{\hat{\jmath}}$ for all $j\in \{1,\ldots,n\}$. The image of this map will be denoted by $T(A^{*}_{j})\subset \hat{A}_{\hat{\jmath}}$
and the image of the transpose will be denoted by $T^{t}(\hat{A}^{*}_{\hat{\jmath}})\subset A_{j}$.
\end{defn}
\begin{defn}
The \textit{multilinear rank} of $T \in A_{1}\otimes\ldots\otimes A_{n}$ is the $n$-tuple of natural numbers
\begin{align}
\mathbf{R}_{multlin}(T):=(\textrm{dim}\ T(A^{*}_{1}),\cdots , \textrm{dim}\ T(A^{*}_{j}),\cdots , \textrm{dim}\ T(A^{*}_{n})).
\end{align}
The number $\textrm{dim}\ T(A^{*}_{j})$ is called\textit{ the mode $j$ rank} of $T$.
\end{defn}
\begin{rem}
Observe that for a matrix (i.e., the case $n=2$), the rank and the mode-$1$ and mode-$2$ ranks are all equal.
\end{rem}
\begin{rem}(see Page $34$ Exercise $2.4.2.6$ in \cite{La})
If $T \in A_{1}\otimes\ldots\otimes A_{n}$, then the multilinear rank $(b_{1},\cdots, b_{n})$
of $T$ satisfies $b_{i} \leq \textrm{min}\ (a_{i}, \prod_{j\neq i} a_{j})$ and equality holds for
general tensors. 
\end{rem}
\begin{rem}
For more information of multilinear rank, we recommend the reader to see \cite{Car}.
\end{rem}

\begin{defn}\label{BTD1LL}(see Definition $2.1$ in \cite{Lat 4})
A \textit{block term tensor decomposition} of a tensor $Y\in \mathbb{C}^{I}\otimes
\mathbb{C}^{J}\otimes \mathbb{C}^{K}$ in a sum of multilinear
rank $(1,\ L_{1},\ L_{1}),\cdots,(1,\ L_{R},\ L_{R})$
terms, is a
decomposition of $Y$ of the form
\begin{align}\label{BTD1LL}
Y=\sum_{r=1}^{R}a_{r}\otimes X_{r},
\end{align}
in which $a_{r}\in \mathbb{C}^{I}$, and $X_{r}\in
\mathbb{C}^{J}\otimes \mathbb{C}^{K}$ is of rank $L_{r}$. (Each
term consists of the outer product of a vector and a rank-$L_{r}$
matrix.)
\end{defn}
In \eqref{BTD1LL}, one can permute the $r$-th and $r'$-th term
when $L_{r} = L_{r'}$ . Also one can scale $X_{r}$, provided
$a_{r}$ is counter scaled as well. The decomposition is said to be
\textit{essentially unique} when it is subject only to these
trivial identifications. The decomposition is said to be
\textit{partially unique} when it has finite number of representations.

The main results in this paper are the following.
\begin{thm}\label{main results}
Assume that $L_{1}\leq L_{2}\leq \ldots \leq
L_{R},\ K\geq J> L_{R}$. If
\begin{align}\label{dim ambiant}
\sum_{1\leq r\leq R}(J\cdot L_{r}+L_{r}\cdot
(K-L_{r})+I-1)<IJK,
\end{align}
then for \emph{general} tensors among those admitting block term tensor decomposition as in \eqref{BTD1LL}, the block term tensor decomposition

$(\imath)$ is partially unique under the condition
\begin{align*}
(A)\ {{J}\choose{L_{R}}}\geq R,\ I\geq 2,
\end{align*}

$(\imath\imath)$ has infinitely many expressions under the condition
\begin{align*}
(B)\ IJK<L^{2}_{1}+\cdots+L^{2}_{R},
\end{align*}

$(\imath\imath\imath)$ is essentially unique under the following conditions:
\begin{align*}
&(C)\ I\geq2,\ J,K\geq\sum^{R}_{r=1}L_{r},\\&
(D)\ R=2,\ I\geq 2,\\&
(E)\ I\geq R,\ K\geq
\sum^{R}_{r=1}L_{r},\ J\geq 2L_{R}, {{J}\choose{L_{R}}}\geq
R.
\end{align*}
\end{thm}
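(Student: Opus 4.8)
The plan is to translate each of the three assertions into a statement about the join of the subspace varieties attached to the summands, and then to read off the uniqueness property from the dimension and the tangential behaviour of that join. For each $r$ let $X_{r}\subset\mathbb{C}^{I}\otimes\mathbb{C}^{J}\otimes\mathbb{C}^{K}$ be the affine cone over the subspace variety of tensors of multilinear rank at most $(1,L_{r},L_{r})$; by the discussion above a point of $X_{r}$ is exactly a tensor $a\otimes M$ with $a\in\mathbb{C}^{I}$ and $M$ a $J\times K$ matrix of rank at most $L_{r}$. A tensor admits a decomposition as in \eqref{BTD1LL} precisely when it lies in the join $\mathcal{J}=J(X_{1},\dots,X_{R})$, and a decomposition of a tensor $T$ is a fibre of the addition map $\phi\colon X_{1}\times\cdots\times X_{R}\to\mathbb{C}^{I}\otimes\mathbb{C}^{J}\otimes\mathbb{C}^{K}$, $(T_{1},\dots,T_{R})\mapsto T_{1}+\cdots+T_{R}$. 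The factorisation $M=B_{r}C_{r}^{t}$ gives $\dim X_{r}=(I-1)+L_{r}(J+K-L_{r})=JL_{r}+L_{r}(K-L_{r})+I-1$, so the left-hand side of \eqref{dim ambiant} is exactly $\sum_{r}\dim X_{r}$, the expected dimension of $\mathcal{J}$.

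Part $(\imath\imath)$ is then a pure dimension count and is the easiest case. I would first observe that under $(B)$ the hypothesis \eqref{dim ambiant} cannot hold: using $J\ge L_{r}$ and $L_{r}\le L_{R}$ one has $\sum_{r}\dim X_{r}\ge\sum_{r}L_{r}(J+K-L_{r})\ge K\sum_{r}L_{r}\ge(K/L_{R})\sum_{r}L_{r}^{2}>\sum_{r}L_{r}^{2}>IJK$, the last two inequalities coming from $K>L_{R}$ and from $(B)$. Hence $\dim(X_{1}\times\cdots\times X_{R})=\sum_{r}\dim X_{r}>IJK\ge\dim\mathcal{J}$, so the general fibre of $\phi$ has dimension at least $\sum_{r}\dim X_{r}-IJK>0$. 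A positive-dimensional fibre is a positive-dimensional family of tuples $(T_{1},\dots,T_{R})$ summing to the same tensor, and since permutations account for only finitely many of them, this is precisely the statement that the decomposition has infinitely many expressions.

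For part $(\imath)$ I would invoke Terracini's lemma: at a general point $T=\sum_{r}T_{r}$ the tangent space is $T_{T}\mathcal{J}=\sum_{r}T_{T_{r}}X_{r}$, and $\mathcal{J}$ is \emph{not} defective — equivalently $\phi$ has finite general fibre, i.e.\ the decomposition is partially unique — exactly when this sum is direct of dimension $\sum_{r}\dim X_{r}$. Writing $T_{r}=a_{r}\otimes M_{r}$ with $M_{r}=B_{r}C_{r}^{t}$ and $U_{r}=\mathrm{col}(B_{r})\subset\mathbb{C}^{J}$, $V_{r}=\mathrm{col}(C_{r})\subset\mathbb{C}^{K}$, one has $T_{T_{r}}X_{r}=\mathbb{C}^{I}\otimes\langle M_{r}\rangle+a_{r}\otimes\bigl(U_{r}\otimes\mathbb{C}^{K}+\mathbb{C}^{J}\otimes V_{r}\bigr)$. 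By semicontinuity it suffices to exhibit one configuration of the data $(a_{r},U_{r},V_{r})$ for which these $R$ tangent spaces are independent. The role of $(A)$ is combinatorial here: $\binom{J}{L_{R}}\ge R$ lets me choose the $R$ column spaces $U_{r}$ to be distinct coordinate $L_{r}$-planes in $\mathbb{C}^{J}$, and $I\ge 2$ supplies a second mode-$1$ direction separating the pieces $\mathbb{C}^{I}\otimes\langle M_{r}\rangle$. With the $a_{r}$ generic and $B_{r},C_{r}$ supported on the chosen index sets, the independence of the $T_{T_{r}}X_{r}$ reduces to a block-structured linear-algebra identity that I would verify directly.

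The essential-uniqueness claim $(\imath\imath\imath)$ is the heart of the theorem and the main obstacle, since partial uniqueness from $(\imath)$ does not by itself rule out a finite number $>1$ of decompositions. Here one must show that $\mathcal{J}$ is \emph{not tangentially weakly defective}, i.e.\ that for general $T_{r}$ the contact locus $\Gamma=\{\,p\in X_{1}\cup\cdots\cup X_{R}:T_{p}X_{r}\subseteq T_{T}\mathcal{J}\,\}$ is $0$-dimensional, consisting only of $T_{1},\dots,T_{R}$; by the standard criterion relating non-tangential-weak-defectivity to generic identifiability, this upgrades the finiteness of fibres to genuine essential uniqueness. The strategy is to fix $T_{T}\mathcal{J}=\bigoplus_{r}T_{T_{r}}X_{r}$ from the formula above and to solve, inside each $X_{r}$, the incidence $T_{a\otimes M}X_{r}\subseteq T_{T}\mathcal{J}$ for $(a,M)$, showing it forces $a\otimes M\in\{T_{1},\dots,T_{R}\}$. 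Each hypothesis controls a different mode: under $(C)$ the bounds $J,K\ge\sum_{r}L_{r}$ make all the $U_{s}$ and all the $V_{s}$ simultaneously independent, so any matrix direction in $T_{T}\mathcal{J}$ already lies in a single block; under $(D)$ the case $R=2$ reduces the contact condition to a transversality of two determinantal tangent spaces, done by hand; and under $(E)$ the combination $I\ge R$, $K\ge\sum_{r}L_{r}$, $J\ge 2L_{R}$, $\binom{J}{L_{R}}\ge R$ balances the three modes so that no spurious tangency survives. The hard part throughout is the intersection analysis of the tangent spaces $T_{a\otimes M}X_{r}$: because each is a sum of a mode-$1$ piece and a determinantal piece, showing that membership in $T_{T}\mathcal{J}$ pins down both the line $\langle a\rangle$ and the rank-$L_{r}$ matrix $M$ requires a careful case analysis of how the column and row spaces of a candidate $M$ can distribute among the $U_{s}$ and $V_{s}$, and it is exactly to close off each such case that the three numerical hypotheses $(C)$, $(D)$, $(E)$ are tailored.
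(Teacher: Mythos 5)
Your framework is exactly the paper's: identify decompositions \eqref{BTD1LL} with points of the join of the varieties $Sub_{1,L_r,L_r}$, compute affine tangent spaces via the Kempf--Weyman description, and derive partial uniqueness from non-defectivity (Terracini plus Corollary \ref{partial unique}) and essential uniqueness from absence of tangential weak defectivity. Parts $(\imath)$ and $(\imath\imath)$ of your proposal are sound. For $(\imath)$, choosing the $U_r$ to be distinct coordinate $L_r$-planes via $\binom{J}{L_R}\geq R$ and taking distinct mode-$1$ directions (possible since $I\geq 2$) is precisely the paper's construction, and checking directness of the tangent-space sum is routine from there. For $(\imath\imath)$, your dimension count is a legitimate and more direct substitute for the paper's route through Proposition \ref{def of many join}; your side observation that $(B)$ is incompatible with \eqref{dim ambiant} is correct (indeed $J\cdot L_r+L_r(K-L_r)>L_r^2$ termwise when $K\geq J>L_R$), which shows that $(\imath\imath)$ must be read, as the paper implicitly does, without the hypothesis \eqref{dim ambiant}.

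The gap is in $(\imath\imath\imath)$, which you yourself call the heart of the theorem, and it is twofold. First, the contact-locus analysis is only asserted, never performed: the phrases ``verify directly'', ``done by hand'', and ``tailored to close off each such case'' stand in for what is, in the paper, the entire content of Section 4 --- writing a candidate $\psi_{s}=a'\otimes(b_{1}'\otimes c_{1}'+\cdots+b_{L_{s}}'\otimes c_{L_{s}}')$ in coordinates, expressing the conormal intersection $\bigcap_{p}\hat{T}^{\bot}_{\varphi_{p}}$ explicitly via Terracini, pairing distinguished hyperplanes in it against $A'\otimes B\otimes C'$, and running the case distinction on $a'$ to force either $\psi_{s}=0$ or $\psi_{s}\in A_{q}\otimes B_{q}\otimes C_{q}$. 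Second, and more seriously, your plan to finish all three cases ``by the standard criterion'' (Corollary \ref{twd unique}) does not work uniformly. That corollary needs the $\varphi_{p}$ to be independent general points of their varieties; this holds under $(C)$, where $J,K\geq\sum_{r}L_{r}$, but under $(D)$ (and in the reduction $K=\sum_{r}L_{r}$, $J\geq 2L_{R}$ used for $(E)$) the summands of a general point of the join necessarily have overlapping row and column spaces, $B_{0},C_{0}\neq 0$. In that situation the tangency condition alone only confines $\psi_{s}(A_{1}^{*})$ to a space of the shape $\varphi_{1}(A_{1}^{*})+(B_{1}\oplus B_{0})\otimes C_{0}+B_{0}\otimes(C_{1}\oplus C_{0})$ (plus contributions from $\varphi_{2}$); it does not force $[\psi_{s}]=[\varphi_{1}]$. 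The paper flags exactly this (``we cannot apply Corollary \ref{twd unique} directly in the proof of Condition D'') and closes the argument differently: it invokes the equality of the two decompositions, \eqref{phi1+phi2} (respectively \eqref{phi1+phi R} for $(E)$), in addition to the tangency condition, to conclude $[\psi_{s}(A_{1}^{*})]=[\varphi_{1}(A_{1}^{*})]$ and hence $[\psi_{s}]=[\varphi_{1}]$. Your proposal, as written, contains no mechanism to supply that last step in cases $(D)$ and $(E)$, so essential uniqueness is not established there.
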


\begin{rem}
Here the meaning of \emph{general}  is that, the set of tensors which do
not have the respective uniqueness property is included in a proper subvariety (see Remark \ref{norm form}). Relation $(D)$ in $(\imath\imath\imath)$ also appears as Theorem $2.2$ in \cite{Lat 4} in a different context.  The other relations are new. Our proof is based on algebraic geometric methods presented in the next section. Also, the hypothesis \eqref{dim ambiant} is actually not restrictive (see the beginning of Section $4$). And if $\sum_{1\leq r\leq R}(J\cdot L_{r}+L_{r}\cdot (K-L_{r})+I-1)>IJK$, then \eqref{BTD1LL} has infinitely many expressions.
\end{rem}
Using similar methods, we also establish results valid for:

$(\iota)$ any tensors and any multilinear rank (see Proposition \ref{def of many join}),

$(\iota\iota)$ any $3$-tensors of any multilinear rank (see Proposition \ref{defect of 2 join}).

\subsection{Outline of the paper} In Section $2$ we develop the basic notions on subspace
varieties, defectivity and tangential weak defectivity. At the end of this section we prove Propositions \ref{def of many join} and \ref{defect of 2 join}. Section $3$ contains some examples relevant to our study. In Section $4$ we prove our main Theorem \ref{main results}. Finally, in Section $5$ we give a new proof of  De Lathauwer's criterion of uniqueness of block term tensor decompositions (Theorem $2.3$ in \cite{Lat 4}).
\section{Basic Algebraic Geometric Methods}
\subsection{Notations}

As in \cite{La}, for a finite dimensional complex vector space $V$, $\mathbb{P}V$ denotes the projective space associated to $V$, $\pi$ denotes the projection of $V \backslash \{0\}$ onto  $\mathbb{P}V$; for a variety $X \subset \mathbb{P}V$, $\hat{X}\subset V$ denotes its inverse image under the
projection $\pi$, which is the (affine) cone over $X$ in $V$, and for $x\in X$, $[x]$ denotes $\pi(x)$.

Let $S$ be a subset of $\mathbb{P}V$, then the span $\langle S \rangle$ is by definition the range of $\pi$ on the usual vector span of $\hat{S}$ in $V$.
The Zariski closure of $S$ in $\mathbb{P}V$ will be denoted by $\bar{S}$.

When we will need to specify the the elements of $S$ and its linear span, we use the notation $\{s_{1},s_{2}, \cdots\}$ and $\langle s_{1},s_{2}, \cdots \rangle$ respectively.

For $x\in \hat{X}$, $\hat{T}_{[x]}X:=\hat{T}_{x}\hat{X}$ is the affine tangent space to $X$ at $[x]$.

For a vector space $V$, its dual space is denoted $V^{*}$. If $A
\subset V$ is a subspace, $A^{\bot}\subset V^{*}$ is its
annihilator, namely the space of $f\in V^{*}$, satisfying $f(a)=0,\
\textrm{for}\ \textrm{all}\ a \in A$.

$\hat{T}^{\bot}_{x}X:=(\hat{T}_{x}X)^{\bot}$ is the affine conormal space of $X$ at $x$.

\subsection{Terracini's lemma and its Corollaries}
Terracini \cite{CC, CG} introduced an algebraic
geometric criterion of uniqueness of tensor decomposition. We will use a Corollary of Terracini's lemma, which appears as Proposition $2.4$ in \cite{CC} for secant varieties.

\begin{defn}\label{def sub}(See Definition $1$ in \cite{LaWe})
\textit{Subspace varieties}, denoted
$Sub_{k_{1},\ldots,k_{n}}(A_{1}\otimes\ldots\otimes A_{n})\in
\mathbb{P}(A_{1}\otimes\ldots\otimes A_{n})$ are defined as
\begin{align*}
&Sub_{k_{1},\ldots,k_{n}}(A_{1}\otimes\ldots\otimes A_{n})\\
&:= \overline{\{[T]\in \mathbb{P}(A_{1}\otimes\ldots\otimes A_{n})\mid
\forall i\ \exists A'_{i}\subset A_{i},\ \textrm{dim}\
A'_{i}=k_{i},\ T\in A'_{1}\otimes\ldots\otimes
A'_{n}\}}\\
&:=\{[T]\in \mathbb{P}(A_{1}\otimes\ldots\otimes A_{n})\mid
\forall i\ \dim T(A^{*}_{i})\leq k_{i}\}.
\end{align*}
\end{defn}

\begin{rem}
The multilinear rank of a tensor $[T]\in
\mathbb{P}(A_{1}\otimes...\otimes A_{n})$ is the minimum
$(k_{1},\ldots,k_{n})$ such that $[T]\in
Sub_{k_{1},\ldots,k_{n}}(A_{1}\otimes...\otimes A_{n})$. And the general elements (if they exist) in $Sub_{k_{1},\ldots,k_{n}}(A_{1}\otimes...\otimes A_{n})$ are of multilinear rank-$(k_{1},\ldots,k_{n})$.
\end{rem}

\begin{defn}
If $X_{i},\ i=1,\ldots,k$, $k\leq n$ are projective algebraic
varieties of $\mathbb{P}^{n}=\mathbb{P}V, V=\mathbb{C}^{n+1}$, then the \textit{join} of
$X_{1},\ldots,X_{k}$ is
\begin{align*}
\textbf{J}(X_{1},\ldots,X_{k}):=
\overline{\cup\{\langle [P_{1}],\ldots,[P_{k}]\rangle| P_{i}\in \hat{X}_{i},\ 1\leq i\leq k\}},
\end{align*}
where $P_{i}$, $i=1,\ldots, k$, are linearly independent vectors in $V$.
If $X_{1}=\cdots=X_{k}=X$, then we write
$\textbf{J}(X_{1},...,X_{k})=\sigma_{k}(X)$ and we call this the
$k$-th secant variety to X.
\end{defn}

In our considerations, the following fact will play an essential role.
\begin{rem}\label{norm form}
There was a normal form
for a point $[p]$ of $\sigma_{L}(\mathbb{P}B\times \mathbb{P}C)$ (see Proposition 5.3.0.5 in \cite{La} and also Chapter $11$ in \cite{La}), which is of the form
\begin{align*}
p=b_{1}\otimes c_{1}+\cdots+b_{L}\otimes c_{L}.
\end{align*}
and we may assume that all the $b_{i},1\leq i\leq L$ are linearly independent in $B$ as
well as all the $c_{i},1\leq i\leq L$ (otherwise one would have $[p]\in \sigma_{L-1}(\mathbb{P}B\times \mathbb{P}C)$),
then a general element $[\varphi]\in Sub_{1,L,L}(\mathbb{P}A\otimes \mathbb{P}B\times \mathbb{P}C)$ is of the form
\begin{align*}
\varphi=a_{1}\otimes (b_{1}\otimes c_{1}+\cdots+b_{L}\otimes c_{L}),
\end{align*}
where $a_{1}$ is a nonzero vector in $A$.
\end{rem}
Throughout up to the end of this section, $X_{1},\ldots,X_{k}$ will be as in the above definition.
\begin{thm}\label{ter lem} (Lemma $5.3.0.2$ in \cite{La}, \emph{Terracini's Lemma} of \cite{Ter} in modern language)

Let $P_{i}\in \hat{X}_{i}$ be a general point of $\hat{X}_{i}$, for each $i=1,\ldots, k$, then for
$[P]:=[P_{1}+ \ldots +P_{k}]$,
\begin{align}\label{terracini}
\hat{T}_{[P]}\mathbf{J}(X_{1},\ldots,X_{k})=
\hat{T}_{[P_{1}]}X_{1}+ \cdots +\hat{T}_{[P_{k}]}X_{k}.
\end{align}
\end{thm}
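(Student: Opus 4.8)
The plan is to realize the affine cone over the join as the closure of the image of an addition morphism and then compute its tangent space via the differential of that morphism. Consider the addition map
\[
s:\hat{X}_{1}\times\cdots\times\hat{X}_{k}\To V,\qquad s(Q_{1},\ldots,Q_{k})=Q_{1}+\cdots+Q_{k}.
\]
By the definition of $\mathbf{J}(X_{1},\ldots,X_{k})$, the affine cone $\widehat{\mathbf{J}(X_{1},\ldots,X_{k})}$ is precisely the Zariski closure $\overline{\operatorname{im} s}$ of the image of $s$. Thus, using the convention $\hat{T}_{[P]}\mathbf{J}(X_{1},\ldots,X_{k})=\hat{T}_{P}\widehat{\mathbf{J}(X_{1},\ldots,X_{k})}$, computing the left-hand side of \eqref{terracini} amounts to computing the affine tangent space to $\overline{\operatorname{im} s}$ at the image point $P=s(P_{1},\ldots,P_{k})$.

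First I would record the differential of $s$. Since $s$ is the restriction to the product of cones of the linear map $V^{\times k}\To V$, $(v_{1},\ldots,v_{k})\mapsto v_{1}+\cdots+v_{k}$, its differential at $(P_{1},\ldots,P_{k})$ is that same linear map restricted to the tangent space of the source, which is the direct sum $\hat{T}_{P_{1}}\hat{X}_{1}\oplus\cdots\oplus\hat{T}_{P_{k}}\hat{X}_{k}$. Hence
\[
\operatorname{im}\bigl(d s_{(P_{1},\ldots,P_{k})}\bigr)=\hat{T}_{P_{1}}\hat{X}_{1}+\cdots+\hat{T}_{P_{k}}\hat{X}_{k}=\hat{T}_{[P_{1}]}X_{1}+\cdots+\hat{T}_{[P_{k}]}X_{k},
\]
which is exactly the right-hand side of \eqref{terracini}.

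Next I would invoke the standard principle that, for a morphism of varieties over an algebraically closed field of characteristic zero, the image of the differential at a sufficiently general point of the source equals the affine tangent space of the closure of the image at the corresponding smooth point. One inclusion is automatic: because $s$ maps into $\overline{\operatorname{im} s}$, its differential carries the source tangent space into $\hat{T}_{P}\overline{\operatorname{im} s}$. For the reverse inclusion I would use generic smoothness, available since we work over $\mathbb{C}$: there is a dense open subset over which $s$ is smooth onto its image, so for a general choice of $(P_{1},\ldots,P_{k})$ the rank of $d s$ equals $\dim\overline{\operatorname{im} s}$; as $P$ is then a smooth point of $\overline{\operatorname{im} s}$, the dimension of $\hat{T}_{P}\overline{\operatorname{im} s}$ also equals $\dim\overline{\operatorname{im} s}$, forcing equality. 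Combining this with the differential computation above yields \eqref{terracini}.

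The main obstacle is the genericity bookkeeping in the last step. The hypothesis supplies a general $P_{i}$ in each factor $\hat{X}_{i}$ separately, and I must ensure that the resulting tuple $(P_{1},\ldots,P_{k})$ is general enough in the product for generic smoothness to apply and for $P$ to be a smooth point of the join; here it is useful that a product of dense open sets is dense, that each $X_{i}$ is smooth away from a proper closed subset, and that the linear independence of the $P_{i}$ guarantees $[P]$ genuinely lies on $\mathbf{J}(X_{1},\ldots,X_{k})$ as a well-defined point rather than on a smaller join. These are the points to handle with care; the differential computation itself is immediate from the linearity of addition.
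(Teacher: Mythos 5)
Your proof is correct and is essentially the argument this paper relies on: the paper does not prove Theorem \ref{ter lem} itself but cites Lemma $5.3.0.2$ of \cite{La}, whose proof is exactly your parametrization of the cone over the join by the addition map $s$, the observation that $ds$ has image $\hat{T}_{[P_{1}]}X_{1}+\cdots+\hat{T}_{[P_{k}]}X_{k}$, and generic smoothness in characteristic zero to upgrade the containment to equality at a general point. Your closing remarks on the genericity bookkeeping (smoothness of the factors at general $P_{i}$, linear independence of the $P_{i}$) address the only delicate points, so nothing further is needed.
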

This result reduces the determination of the dimension of the join $\textbf{J}(Sub_{1,L_{1},L_{1}}(\mathbb{C}^{I}\otimes
\mathbb{C}^{J}\otimes
\mathbb{C}^{K}),\cdots,Sub_{1,L_{R},L_{R}}(\mathbb{C}^{I}\otimes
\mathbb{C}^{J}\otimes \mathbb{C}^{K}))$ to be the
calculation of the dimension of the sum of the tangent spaces at general points of our varieties
$Sub_{1,L_{r},L_{r}}(\mathbb{C}^{I}\otimes
\mathbb{C}^{J}\otimes \mathbb{C}^{K})),1\leq r\leq R$.
\begin{defn}
The \textit{expected dimension} of $\textbf{J}(X_{1},\ldots,X_{k})$ is $\min
\{d_{1}+\cdots+d_{k}+k-1,\ n\}$, where $d_{i}=\dim X_{i}$, $1\leq i\leq k$. The \textit{defect} of $\textbf{J}(X_{1},\ldots,X_{k})$ is
\begin{align*}
\delta(\textbf{J}(X_{1},\ldots,X_{k}))=d_{1}+\cdots+d_{k}+k-1-
\textrm{dim}\ \textbf{J}(X_{1},\ldots,X_{k}).
\end{align*}
When the defect of
$\textbf{J}(X_{1},\ldots,X_{k})$ is positive, we say that
$\textbf{J}(X_{1},\ldots,X_{k})$ is \textit{defective}.
\end{defn}
\begin{rem}
By the upper semicontinuity of dimension of tangent space (see Exercise $II.3.22$ of \cite{Hart}), if for one particular set of general points $\{P_{1},\cdots,P_{k}\}$, $\hat{T}_{[P_{1}]}X_{1}+ \cdots +\hat{T}_{[P_{k}]}X_{k}$
has the expected dimension, then  $\textbf{J}(X_{1},\ldots,X_{k})$ is
not defective.
\end{rem}
\begin{cor}\label{partial unique}
If $\textbf{J}(X_{1},\ldots,X_{k})$ is not defective, then general points $[\varphi]$ on $\textbf{J}(X_{1},\ldots,X_{k})$ have a finite
number of decompositions
\begin{align*}
\varphi=P_{1}+\cdots+P_{k},
\end{align*}
with $[P_{i}]\in X_{i}$, $1\leq i\leq k$. Moreover, if $\textbf{J}(X_{1},\ldots,X_{k})$ is defective, then all $[\varphi]\in \textbf{J}(X_{1},\ldots,X_{k})$ have infinitely-many decompositions.
\end{cor}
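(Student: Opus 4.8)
The plan is to translate the two fiber-dimension statements into a single count via Terracini's Lemma, using the incidence correspondence that parametrizes decompositions. First I would introduce the abstract join (the parameter space of decompositions)
\begin{align*}
\widetilde{\mathbf{J}}:=\overline{\{([P_1],\ldots,[P_k],[P])\mid [P_i]\in X_i,\ P=P_1+\cdots+P_k\}}\subset X_1\times\cdots\times X_k\times\mathbb{P}V,
\end{align*}
together with the two natural projections $\alpha\colon\widetilde{\mathbf{J}}\to X_1\times\cdots\times X_k$ and $\beta\colon\widetilde{\mathbf{J}}\to\mathbf{J}(X_1,\ldots,X_k)$. Since $\alpha$ is dominant with generically one-dimensional fibers (the scaling of the $P_i$ inside their cones, compatible with the sum), the source has dimension $d_1+\cdots+d_k+k-1$, i.e. exactly the expected dimension of the join. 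The fiber $\beta^{-1}([\varphi])$ over a point $[\varphi]$ is, up to the same one-parameter scaling ambiguity, precisely the set of decompositions $\varphi=P_1+\cdots+P_k$ with $[P_i]\in X_i$. Thus the whole statement is an assertion about the generic fiber dimension of $\beta$.

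The key step is the fiber-dimension theorem for the dominant morphism $\beta$: a general fiber has dimension $\dim\widetilde{\mathbf{J}}-\dim\mathbf{J}(X_1,\ldots,X_k)$. For the first claim, if $\mathbf{J}(X_1,\ldots,X_k)$ is \emph{not} defective, then $\dim\mathbf{J}(X_1,\ldots,X_k)=d_1+\cdots+d_k+k-1=\dim\widetilde{\mathbf{J}}$, so the generic fiber of $\beta$ is finite; after removing the one-dimensional scaling redundancy built into $\widetilde{\mathbf{J}}$, a general $[\varphi]$ has only finitely many decompositions. For the second claim, if the join is defective, then by definition $\dim\mathbf{J}(X_1,\ldots,X_k)<d_1+\cdots+d_k+k-1=\dim\widetilde{\mathbf{J}}$, so \emph{every} fiber of $\beta$ has dimension at least $\dim\widetilde{\mathbf{J}}-\dim\mathbf{J}(X_1,\ldots,X_k)\geq 1$; since $\beta$ is surjective onto the join, this positive lower bound holds over all of $\mathbf{J}(X_1,\ldots,X_k)$, giving infinitely many decompositions for \emph{every} point, not merely the generic one.

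The main obstacle is justifying that $\dim\widetilde{\mathbf{J}}$ really equals the expected dimension $d_1+\cdots+d_k+k-1$, which is where Terracini's Lemma enters: I would argue that at a general point $([P_1],\ldots,[P_k],[P])$ the differential of $\alpha$ together with the summation map has rank $d_1+\cdots+d_k+k-1$, precisely because Theorem \ref{ter lem} identifies $\hat{T}_{[P]}\mathbf{J}(X_1,\ldots,X_k)$ with $\hat{T}_{[P_1]}X_1+\cdots+\hat{T}_{[P_k]}X_k$, so the image of $\beta$ has the dimension computed from that tangent sum while the source retains full expected dimension. A secondary subtlety is the transition from \emph{generic} finiteness to the \emph{every-point} statement in the defective case: this requires the lower-bound form of the fiber-dimension theorem (every fiber of a dominant morphism between irreducible varieties has dimension at least the difference of dimensions), for which I would first reduce to the irreducible components of $\widetilde{\mathbf{J}}$ dominating the join and apply the bound componentwise.
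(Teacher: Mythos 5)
Your core strategy is sound and is, in substance, the proof the paper is pointing to: the paper does not prove Corollary \ref{partial unique} itself but cites Corollary $5.3.0.4$ of \cite{La}, whose argument is precisely this incidence correspondence $\widetilde{\mathbf{J}}\subset X_{1}\times\cdots\times X_{k}\times\mathbb{P}V$ with the two projections $\alpha,\beta$ and the fiber-dimension theorem applied to $\beta$. Two details in your write-up need repair, though. First, the general fiber of $\alpha$ is not one-dimensional: over a tuple $([P_{1}],\ldots,[P_{k}])$ of independent points, the set of $[P]$ with $P=\lambda_{1}P_{1}+\cdots+\lambda_{k}P_{k}$, $\lambda_{i}\in\mathbb{C}^{*}$, is a dense open subset of the span $\langle [P_{1}],\ldots,[P_{k}]\rangle\cong\mathbb{P}^{k-1}$, hence has dimension $k-1$. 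As written, your two claims contradict each other: one-dimensional fibers would give $\dim\widetilde{\mathbf{J}}=d_{1}+\cdots+d_{k}+1$, and then for $k>2$ the map $\beta$ could not even be surjective in the non-defective case; with the fiber dimension corrected to $k-1$, the count $\dim\widetilde{\mathbf{J}}=d_{1}+\cdots+d_{k}+k-1$ that you actually use is right. Second, Terracini's Lemma (Theorem \ref{ter lem}) is not what gives you $\dim\widetilde{\mathbf{J}}$, and in fact it is not needed anywhere in this corollary: the source dimension comes from the $\mathbb{P}^{k-1}$-bundle structure just described, while the target dimension is exactly what the defectivity hypothesis controls; Terracini is the tool one uses elsewhere in the paper to \emph{verify} (non-)defectivity, not to deduce the corollary from it. Finally, in the defective case there is a small subtlety you should acknowledge: points of a positive-dimensional fiber of $\beta$ lie a priori only in the closure of the locus of honest sums (some $\lambda_{i}$ may degenerate to $0$ in the limit), so to conclude ``infinitely many decompositions'' one should either note that the open locus where all $\lambda_{i}\neq 0$ remains dense in the relevant fiber components, or state the conclusion, as \cite{La} does, in the form that $[\varphi]$ lies on infinitely many secant $(k-1)$-planes spanned by points of the $X_{i}$.
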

\begin{proof}
See Corollary $5.3.0.4$ in \cite{La}.
\end{proof}
\begin{defn}(see Definition $2.6$ in \cite{CG})
Let $P_{i}\in \hat{X}_{i}$ be a general point of $\hat{X}_{i}$, $1\leq i\leq k$. When for any $j\in \{1,\ldots,k\}$ and $Q_{j}\in \hat{X}_{j}$, $\hat{T}_{[P_{1}]}X_{1}+\cdots+\hat{T}_{[P_{k}]}X_{k}$
contains $\hat{T}_{[Q_{j}]}X_{j}$ only if $[Q_{j}]\in \{[P_{1}],\ldots,[P_{k}]\}$, we say
$\textbf{J}(X_{1},\ldots,X_{k})$ is \textit{not tangentially weakly
defective}. Otherwise, we say that $\textbf{J}(X_{1},\ldots,X_{k})$ is
\textit{tangentially weakly defective}.
\end{defn}
\begin{rem}
By semicontinuity (see Theorem  $III.12.8$ of \cite{Hart}), if for one particular set of general points $\{P_{1},\ldots,P_{k}\}$, $\hat{T}_{[P_{1}]}X_{1}+\cdots+\hat{T}_{[P_{k}]}X_{k}$
contains $\hat{T}_{[Q_{j}]}X_{j}$ only if $[Q_{j}]\in \{[P_{1}],\ldots,[P_{k}]\}$, then  $\textbf{J}(X_{1},\ldots,X_{k})$ is
tangentially weakly defective.
\end{rem}

\begin{rem}
Notice that if $\textbf{J}(X_{1},\ldots,X_{k})$ is defective, then it
is tangentially weakly defective, but the converse is not true (see Example \ref{222 444}).
\end{rem}

\begin{rem}\label{nor terr}
The equality in Theorem \ref{ter lem} is equivalent to
\begin{align*}
\hat{T}^{\bot}_{[P]}\textbf{J}(X_{1},\ldots,X_{k})=\bigcap_{1\leq
i\leq k}\hat{T}^{\bot}_{[P_{i}]}X_{i}.
\end{align*}
Moreover $\hat{T}_{[P_{1}]}X_{1}+\cdots+\hat{T}_{[P_{k}]}X_{k}\supset
\hat{T}_{[Q_{j}]}X_{j}$ is equivalent to
\begin{align*}
\bigcap_{1\leq i\leq k}\hat{T}^{\bot}_{[P_{i}]}X_{i}\subset
\hat{T}^{\bot}_{[Q_{j}]}X_{j};
\end{align*}
so if $\textbf{J}(X_{1},\ldots,X_{k})$ is  tangentially weakly
defective, then every hyperplane in
$\bigcap_{1\leq i\leq
k}\hat{T}^{\bot}_{[P_{i}]}X_{i}$ is also tangent at $[Q_{j}]\in
X_{j}$.
\end{rem}
We will need the following generalization of of Proposition $2.4$ in \cite{CC}.
\begin{cor}\label{twd unique}
If $\textbf{J}(X_{1},\ldots,X_{k})$ is not tangentially weakly
defective, then for general $[\varphi]\in
\textbf{J}(X_{1},\ldots,X_{k})$, the decomposition
\begin{align*}
[\varphi]=[P_{1}+\cdots+P_{k}],
\end{align*}
with $[P_{i}]\in X_{i}$, $1\leq i\leq k$, is essentially unique.
\end{cor}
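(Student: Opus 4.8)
The plan is to read off the affine tangent space to the join at a general point in two different ways --- through two a priori distinct decompositions --- using Terracini's Lemma, and then to invoke the non-tangential-weak-defectivity hypothesis to force the two decompositions to determine the same set of points of the $X_i$. To set up, observe first that since $\mathbf{J}(X_1,\ldots,X_k)$ is not tangentially weakly defective it is not defective, by the contrapositive of the remark that every defective join is tangentially weakly defective. Consider the addition morphism
\[
\mu\colon \hat{X}_1\times\cdots\times\hat{X}_k\longrightarrow \hat{\mathbf{J}}(X_1,\ldots,X_k),\qquad (P_1,\ldots,P_k)\longmapsto P_1+\cdots+P_k,
\]
which is dominant by the definition of the join. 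Non-defectivity means the source and target cones have the same dimension, so $\mu$ is generically finite; equivalently, by Corollary \ref{partial unique}, a general $[\varphi]$ admits only finitely many decompositions.

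The technical heart of the argument is to show that for general $[\varphi]$ \emph{every} decomposition in the fiber over $\varphi$ consists of general points of the factors $X_i$, so that Terracini's Lemma is available for each of them and not only for one distinguished decomposition. I would obtain this by a dimension count. Let $B\subset \hat{X}_1\times\cdots\times\hat{X}_k$ be the proper closed locus of tuples that fail to project to general points of the factors. Since each cone $\hat{X}_i$ is irreducible, the product is irreducible, and because $\mu$ is generically finite, $\overline{\mu(B)}$ has dimension strictly smaller than $\dim \hat{\mathbf{J}}(X_1,\ldots,X_k)$ and hence is a proper closed subset of the target. For $\varphi$ outside $\overline{\mu(B)}$ no point of $\mu^{-1}(\varphi)$ lies in $B$, i.e. all components of every decomposition are general.

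With this in hand, fix a general $[\varphi]$, a general decomposition $[\varphi]=[P_1+\cdots+P_k]$, and any second decomposition $[\varphi]=[Q_1+\cdots+Q_k]$ lying in the same fiber. Since both the $P_i$ and the $Q_i$ are general points of the $X_i$, Terracini's Lemma (Theorem \ref{ter lem}) applied to each decomposition yields
\[
\sum_{i=1}^{k}\hat{T}_{[P_i]}X_i=\hat{T}_{[\varphi]}\mathbf{J}(X_1,\ldots,X_k)=\sum_{i=1}^{k}\hat{T}_{[Q_i]}X_i .
\]
In particular $\hat{T}_{[Q_j]}X_j\subseteq \sum_{i}\hat{T}_{[P_i]}X_i$ for every $j$. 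Because the $P_i$ are general and $\mathbf{J}(X_1,\ldots,X_k)$ is not tangentially weakly defective, the defining property forces $[Q_j]\in\{[P_1],\ldots,[P_k]\}$ for each $j$. As a general decomposition consists of $k$ linearly independent, hence distinct, points, we conclude that $\{[Q_1],\ldots,[Q_k]\}=\{[P_1],\ldots,[P_k]\}$. Since this unordered set of projective points is exactly the datum of the decomposition modulo the trivial rescalings and permutations, the decomposition is essentially unique.

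I expect the genericity step of the second paragraph to be the main obstacle: the entire argument hinges on knowing that the \emph{second}, a priori uncontrolled, decomposition $(Q_i)$ again consists of general points, so that Terracini's \emph{equality} --- rather than the mere inclusion $\sum_i\hat{T}_{[Q_i]}X_i\subseteq\hat{T}_{[\varphi]}\mathbf{J}(X_1,\ldots,X_k)$ valid at arbitrary points --- is at our disposal. One could instead phrase this final passage dually via Remark \ref{nor terr}, by showing that a general hyperplane tangent to the join at $[\varphi]$ is automatically tangent to $X_j$ at each $[Q_j]$, but the key difficulty is the same.
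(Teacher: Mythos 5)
Your proposal is correct, and it reaches the conclusion by a genuinely different treatment of the one step that requires care. Both arguments share the same skeleton: take a general $[\varphi]$ with two decompositions $[\varphi]=[P_{1}+\cdots+P_{k}]=[Q_{1}+\cdots+Q_{k}]$, apply Terracini's Lemma to the general tuple $(P_{i})$, and invoke non-tangential weak defectivity to force each $[Q_{j}]$ into $\{[P_{1}],\ldots,[P_{k}]\}$. The paper's proof (four lines, modelled on Proposition $2.4$ of \cite{CC}) gets the needed containment $\hat{T}_{[Q_{j}]}X_{j}\subseteq \hat{T}_{[P_{1}]}X_{1}+\cdots+\hat{T}_{[P_{k}]}X_{k}$ at once, by combining Terracini at $(P_{i})$ with the one-sided inclusion $\hat{T}_{[Q_{j}]}X_{j}\subseteq \hat{T}_{[\varphi]}\mathbf{J}(X_{1},\ldots,X_{k})$, which holds for an \emph{arbitrary} decomposition of a smooth point $\varphi$ (the differential of the addition map $\mu$ at $(Q_{1},\ldots,Q_{k})$ lands in the tangent space of the join); this suffices precisely because the definition of not tangentially weakly defective quantifies over all $Q_{j}\in \hat{X}_{j}$, not only general ones. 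You instead prove the stronger statement that every fiber of $\mu$ over a general $\varphi$ consists of Terracini-general tuples, using non-defectivity (source and target cones of $\mu$ have equal dimension, by the paper's remark that defective implies tangentially weakly defective) so that the bad locus $B$ has image of strictly smaller dimension, and then you may apply Terracini's equality on both sides. This is valid, with one small repair: $B$ must be taken to be the complement of the open set of Terracini-good \emph{tuples} (genericity of a tuple is stronger than genericity of each entry separately, e.g.\ it excludes coincident entries); since that complement is still proper and closed, your dimension count is unaffected. What your route buys is that you never invoke tangent-space inclusions at possibly special or singular points $Q_{j}$, so the argument is self-contained given Terracini as stated; what it costs is the extra fibration argument, which the paper's direct contradiction avoids. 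Accordingly, the worry in your closing paragraph is misplaced: the equality of tangent sums at $(Q_{i})$ is never needed, only the containment of $\hat{T}_{[Q_{j}]}X_{j}$ in $\hat{T}_{[\varphi]}\mathbf{J}(X_{1},\ldots,X_{k})$, which (via Remark \ref{nor terr} or the differential of $\mu$) is available at arbitrary decompositions --- this is exactly what the hypothesis of Corollary \ref{twd unique} was designed to exploit.
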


\begin{proof}
The proof proceeds like the proof of Proposition $2.4$ in \cite{CC}. Assume the contrary, let us take a general point $[\varphi]\in
\textbf{J}(X_{1},\ldots,X_{k})$ and
\begin{align*}
[\varphi]=[P_{1}+\cdots+P_{k}]=[Q_{1}+\cdots+Q_{k}],
\end{align*}
with $[Q_{i}]\in X_{i}$, $1\leq i\leq k$, and at least one of them, say $[Q_{j}]\in X_{j}$,
not belong to $\{[P_{1}],\ \ldots,\ [P_{k}]\}$. Then by Lemma \ref{terracini}, $\hat{T}_{[P_{1}]}X_{1}+\cdots +\hat{T}_{[P_{k}]}X_{k}$
also contains the tangent space of $X_{j}$ at $[Q_{j}]$. Hence we get a contradiction, and Corollary  \ref{twd unique} follows.
\end{proof}

\begin{defn}(see Page 6 in \cite{CG})
The secant variety $\sigma_{k}(X)$ is \textit{weakly defective} if the general hyperplane
which is tangent to $X$ at some $k$ general points
$[P_{1}],\ldots,[P_{k}]$, is also tangent at some other point $[Q]\neq
[P_{1}],\ldots,[P_{k}]$. Here general means in an open subset of the set of hyperplanes which
are tangent to $X$ at $k$ general family of points $[P_{1}],\cdots,[P_{k}]$.
\end{defn}
\begin{rem}
By Theorem \ref{ter lem}, if $\textbf{J}(X_{1},\ldots,X_{k})$ is
defective then $\textbf{J}(X_{1},\ldots,X_{k})$ is weakly defective,
but the converse is not necessarily true (see Example \ref{weak by not tan-weak}). To sum up, we have the
following relationship
\begin{align*}
\{\textrm{defectivity}\}\subset \{\textrm{tangentially\
weak-defectivity}\}\subset \{\textrm{weak-defectivity}\}
\end{align*}
\end{rem}
\subsection{Infinitesimal study of subspace varieties}
For the benefit of the reader, we recall the following standard notations (see any textbook of algebraic geometry, for example \cite{Weyman}).

Let $V= \mathbb{C}^{n}$, $G(m,V)$ denote the
Grassmannian of $m$-planes through the origin in $V$. It is a
smooth compact algebraic variety of dimension $m(n-m)$.

The {\em{trivial bundle}} $G(m,V)\times V,\ V\cong \mathbb{C}^{n}$
over $G(m,V)$ contains the {\em{universal subbundle}}
$\mathcal{S}$ of rank $m$ that consists of the pairs $(E,v)$ with
$v\in E$. The {\em{quotient bundle}} $\mathcal{Q}$ over $G(m,V)$
of rank $n-m$ whose fiber over $E$ is canonically isomorphic to
$V/E$. These fit into the exact sequence
\begin{align*}
0\longrightarrow \mathcal{S}\longrightarrow G(m,V)\times
V\longrightarrow \mathcal{Q}\longrightarrow 0.
\end{align*}
The following Lemma is well known:
\begin{lem}\label{grassman lemma}
There is a canonical bundle isomorphism
\begin{align*}
TG(m,V)=\mathcal{Q}\otimes \mathcal{S}^{*}
\end{align*}
corresponding to the canonical isomorphism
\begin{align*}\label{tangentgrass}
T_{E}G(m,V)\cong V/E\otimes E^{*}.
\end{align*}
\end{lem}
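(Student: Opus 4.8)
The plan is to first construct the fiberwise isomorphism $T_E G(m,V) \cong V/E \otimes E^*$ canonically, and then check that these fiberwise maps assemble into an algebraic bundle isomorphism. For the pointwise statement I would use the velocity description of tangent vectors. A tangent vector $\xi \in T_E G(m,V)$ is represented by a smooth (or formal) curve $t \mapsto E(t)$ of $m$-planes with $E(0) = E$. To $\xi$ I associate a homomorphism $\phi_\xi \colon E \to V/E$ as follows: given $e \in E$, choose any lift to a curve $e(t) \in E(t)$ with $e(0) = e$, and set $\phi_\xi(e) := \dot{e}(0) \bmod E \in V/E$. The key verifications are that $\phi_\xi(e)$ is independent of the chosen lift $e(t)$ --- two lifts differ by a curve lying in $E(t)$ and vanishing at $t=0$, whose derivative at $0$ therefore lies in $E$ and dies in $V/E$ --- and that $\phi_\xi$ is linear in $e$ and depends only on $\xi$, not on the representing curve $E(t)$. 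This produces a canonical linear map $T_E G(m,V) \to \mathrm{Hom}(E, V/E) = V/E \otimes E^*$, with no choices involved.

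To see that this map is an isomorphism, I would exhibit the inverse via the standard affine charts on $G(m,V)$. Fixing a complement $F$ with $V = E \oplus F$, every $m$-plane $E'$ transverse to $F$ is the graph $\{e + \psi(e) : e \in E\}$ of a unique $\psi \in \mathrm{Hom}(E, F)$, and $\psi \mapsto \mathrm{graph}(\psi)$ identifies a Zariski-open neighborhood $U_F$ of $E$ with the affine space $\mathrm{Hom}(E, F)$. Differentiating at $\psi = 0$ and composing with the canonical isomorphism $F \xrightarrow{\sim} V/E$ (restriction of the quotient projection) recovers exactly the map $\phi_\xi$ above, so it is a linear isomorphism. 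This chart also supplies the sanity check $\dim T_E G(m,V) = \dim \mathrm{Hom}(E,F) = m(n-m)$, consistent with $\dim G(m,V)$.

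The main point --- and the step I expect to require the most care --- is upgrading the fiberwise isomorphisms to a bundle isomorphism $TG(m,V) \cong \mathcal{Q} \otimes \mathcal{S}^*$. Since $\mathcal{Q} \otimes \mathcal{S}^* = \mathcal{H}om(\mathcal{S}, \mathcal{Q})$ has fiber $\mathrm{Hom}(E, V/E)$ over $E$, it suffices to show that $\xi \mapsto \phi_\xi$ is algebraic in the base point. This follows by working in the affine charts above: there the tautological sequence $0 \to \mathcal{S} \to G(m,V) \times V \to \mathcal{Q} \to 0$ trivializes, with $\mathcal{S}|_{U_F}$ framed by the graph sections $e + \psi(e)$ and $\mathcal{Q}|_{U_F} \cong F$, and the identification of $T U_F$ with $\mathrm{Hom}(E,F)$ is given by polynomial formulas in the chart coordinates $\psi$. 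Checking that the maps built in two overlapping charts $U_F$, $U_{F'}$ agree on the overlap --- equivalently, that the construction is independent of the complement, which is already transparent from the choice-free velocity description --- shows the local isomorphisms glue to the asserted global bundle isomorphism.
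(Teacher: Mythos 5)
Your proof is correct, but there is nothing in the paper to compare it against: the lemma is stated as ``well known'' and used as a black box, with no proof given (the intended source being standard texts such as \cite{Weyman}). Your argument is the classical one and is sound. The velocity description gives a canonical, choice-free linear map $T_{E}G(m,V)\rightarrow \mathrm{Hom}(E,V/E)$, and your key verification is exactly right: two lifts of $e$ differ by a curve lying in $E(t)$ and vanishing at $t=0$, so its derivative at $0$ lies in $E$ and dies in $V/E$. The graph charts $U_{F}\cong \mathrm{Hom}(E,F)$ then supply the inverse together with the dimension count $m(n-m)$, and since the pointwise construction involves no choice of complement, the chart-by-chart identifications agree on overlaps and glue to the bundle isomorphism $TG(m,V)\cong \mathcal{Q}\otimes\mathcal{S}^{*}$, whose fiber over $E$ is $(V/E)\otimes E^{*}=\mathrm{Hom}(E,V/E)$. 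Relative to the paper's bare citation, what your write-up buys is self-containedness and, more importantly, an explicit record of the canonicity of the isomorphism; this is precisely what the paper tacitly relies on later, when Lemma \ref{tangentspace} computes tangent spaces to subspace varieties by differentiating the Kempf--Weyman desingularization \eqref{weyman desing}, where the identification $T_{E}G(m,V)\cong V/E\otimes E^{*}$ must be compatible with varying $E$ in the base of the bundle $\mathcal{S}_{1}\otimes\cdots\otimes\mathcal{S}_{n}$.
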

\begin{lem}\label{tangentspace}
Let $A_{1},\ldots,A_{n}$ and $A'_{1},\ldots,A'_{n}$ be as in Definition \ref{def sub}. For general $\varphi\in A'_{1}\otimes\ldots\otimes A'_{n}$, we have

\begin{align}
\hat{T}_{\varphi}(\widehat{Sub}_{k_{1},\ldots,k_{n}}(A_{1}\otimes\cdots\otimes
A_{n}))=(A'_{1}\otimes\cdots\otimes A'_{n})+\sum_{1\leq i\leq n}
(A_{i}\otimes \varphi(A'^{*}_{i})),
\end{align}
where $\varphi(A'^{*}_{i}),1\leq i\leq n$ is defined as in Definition \ref{tensor hom}.
\end{lem}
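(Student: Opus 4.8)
The plan is to realise $\widehat{Sub}_{k_{1},\ldots,k_{n}}(A_{1}\otimes\cdots\otimes A_{n})$ as the image of an explicit vector bundle over a product of Grassmannians, and then to compute the tangent space as the image of the differential of this parametrisation at a general point. Concretely, I set $G:=G(k_{1},A_{1})\times\cdots\times G(k_{n},A_{n})$ and let $\mathcal{E}\to G$ be the vector bundle whose fibre over $(E_{1},\ldots,E_{n})$ is $E_{1}\otimes\cdots\otimes E_{n}\subset A_{1}\otimes\cdots\otimes A_{n}$. The total space of $\mathcal{E}$ is smooth and irreducible, and the projection $\pi\colon\mathcal{E}\to A_{1}\otimes\cdots\otimes A_{n}$, $(E_{1},\ldots,E_{n},T)\mapsto T$, has image exactly $\widehat{Sub}_{k_{1},\ldots,k_{n}}$ by Definition \ref{def sub}. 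For general $\varphi$ the factor spaces are recovered from $\varphi$ itself as $A'_{i}=\varphi^{t}(\hat{A}^{*}_{\hat{\imath}})$ in the notation of Definition \ref{tensor hom}, so $\pi$ is generically one-to-one with unique preimage $(A'_{1},\ldots,A'_{n},\varphi)$; this also confirms that a general $\varphi\in A'_{1}\otimes\cdots\otimes A'_{n}$ is a general point of the subspace variety. Since we work in characteristic zero and $\mathcal{E}$ is smooth, generic smoothness gives $\hat{T}_{\varphi}\widehat{Sub}_{k_{1},\ldots,k_{n}}=\mathrm{im}\,(d\pi)$ at the point $(A'_{1},\ldots,A'_{n},\varphi)$, so it remains only to identify this image.

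To compute $\mathrm{im}\,(d\pi)$ I would split the tangent space of $\mathcal{E}$ at $(A'_{1},\ldots,A'_{n},\varphi)$ into its vertical and horizontal parts. The vertical directions keep all $E_{i}$ fixed and move $T$ inside the fibre $A'_{1}\otimes\cdots\otimes A'_{n}$; differentiating a curve $t\mapsto(A'_{1},\ldots,A'_{n},T(t))$ with $T(0)=\varphi$ shows that these contribute precisely the summand $A'_{1}\otimes\cdots\otimes A'_{n}$. The horizontal directions deform one Grassmannian factor at a time: by Lemma \ref{grassman lemma} a tangent vector to $G(k_{i},A_{i})$ at $A'_{i}$ is an element $\phi_{i}\in\mathrm{Hom}(A'_{i},A_{i}/A'_{i})$, and choosing a lift $\tilde{\phi}_{i}\in\mathrm{Hom}(A'_{i},A_{i})$ together with the curve $E_{i}(t)=(\mathrm{id}+t\tilde{\phi}_{i})(A'_{i})$ and $T(t)=(\mathrm{id}_{A_{1}}\otimes\cdots\otimes(\mathrm{id}+t\tilde{\phi}_{i})\otimes\cdots\otimes\mathrm{id}_{A_{n}})(\varphi)$ gives $T'(0)=(\mathrm{id}\otimes\cdots\otimes\tilde{\phi}_{i}\otimes\cdots\otimes\mathrm{id})(\varphi)$.

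The key computation is then to identify the span of these horizontal vectors. Writing $\tilde{\phi}_{i}=\sum_{\beta}a_{\beta}\otimes f_{\beta}$ with $a_{\beta}\in A_{i}$ and $f_{\beta}\in A'^{*}_{i}$ under the identification $\mathrm{Hom}(A'_{i},A_{i})=A_{i}\otimes A'^{*}_{i}$, a direct contraction yields $(\mathrm{id}\otimes\cdots\otimes\tilde{\phi}_{i}\otimes\cdots\otimes\mathrm{id})(\varphi)=\sum_{\beta}a_{\beta}\otimes\varphi(f_{\beta})$, where $\varphi(f_{\beta})\in\varphi(A'^{*}_{i})$ is the value of the contraction map of Definition \ref{tensor hom}. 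As $\tilde{\phi}_{i}$ ranges over $\mathrm{Hom}(A'_{i},A_{i})$, the vectors $a_{\beta}$ sweep out all of $A_{i}$ while $\varphi(f_{\beta})$ sweeps out all of $\varphi(A'^{*}_{i})$, so the $i$-th horizontal block spans exactly $A_{i}\otimes\varphi(A'^{*}_{i})$. Replacing an arbitrary lift by one representing a genuine tangent vector (i.e. changing $\tilde{\phi}_{i}$ by a map into $A'_{i}$) only alters $T'(0)$ by an element of $A'_{i}\otimes\varphi(A'^{*}_{i})\subset A'_{1}\otimes\cdots\otimes A'_{n}$, which is already supplied by the vertical part, so the total image is unaffected. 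Summing the vertical contribution with the $n$ horizontal blocks gives $\hat{T}_{\varphi}\widehat{Sub}_{k_{1},\ldots,k_{n}}=(A'_{1}\otimes\cdots\otimes A'_{n})+\sum_{1\le i\le n}A_{i}\otimes\varphi(A'^{*}_{i})$, as claimed. I expect the main obstacle to be the clean justification that $\mathrm{im}\,(d\pi)$ equals the full tangent space of the possibly singular variety $\widehat{Sub}_{k_{1},\ldots,k_{n}}$ at $\varphi$ — the generic smoothness and birationality step — rather than the bookkeeping in the horizontal computation, which reduces to the single contraction identity above.
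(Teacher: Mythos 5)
Your proposal is correct and is essentially the paper's own argument: the paper proves Lemma \ref{tangentspace} via the same Kempf--Weyman desingularization $q\colon\mathcal{S}_{1}\otimes\cdots\otimes\mathcal{S}_{n}\to\widehat{Sub}_{k_{1},\ldots,k_{n}}(A_{1}\otimes\cdots\otimes A_{n})$ over $G(k_{1},A_{1})\times\cdots\times G(k_{n},A_{n})$, using the generic one-to-one property ($E_{i}=T^{t}(A^{*}_{\hat{\imath}})$) and the Grassmannian tangent space identification of Lemma \ref{grassman lemma}. The only difference is that you write out explicitly the vertical/horizontal splitting and the contraction identity giving the blocks $A_{i}\otimes\varphi(A'^{*}_{i})$, details which the paper leaves implicit by citing Chapter 7.2 of Weyman's book.
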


\begin{proof}
First, we recall the following Kempf-Weyman desingularization for $\widehat{Sub}_{k_{1},\ldots,k_{n}}(A_{1}\otimes\ldots\otimes
A_{n})$ as in section $7.4.2$ of \cite{La}.

Consider the
product of Grassmannians
$B=G(k_{1},A_{1})\times\cdots \times G(k_{n},A_{n})$
and the bundle
$\mathcal{S}:= \mathcal{S}_{1}\otimes\cdots\otimes \mathcal{S}_{n}\rightarrow_{p} B$,
which is the tensor product of the tautological subspace
bundles pulled back to $B$. A point of $\mathcal{S}$ is of the form $(E_{1},\ldots,E_{n};T)$ where
$E_{j}\subset A_{j}$ is a $k_{j}$ -plane, and $T\in E_{1}\otimes\cdots\otimes E_{n}$. Consider the projection
$q: \mathcal{S}\rightarrow A_{1}\otimes\cdots\otimes A_{n}, (E_{1},\ldots,E_{n};T)\rightarrow T$. The image of $q$ is $\widehat{\mbox{\it Sub}}_{k_{1},\ldots,k_{n}}A_{1}\otimes\cdots\otimes A_{n}$. If $T$ is a smooth point in $\widehat{\mbox{\it Sub}}_{k_{1},\ldots,k_{n}}A_{1}\otimes\cdots\otimes A_{n}$, then $\dim\ T(A^{*}_{i})=k_{i}$ for all $1\leq i\leq n$ and $E_{i}=T^{t}(A^{*}_{\hat{\imath}})$ is the unique preimage of $T$ under $q$.
Thus the map $q: \mathcal{S}\rightarrow \widehat{\mbox{\it Sub}}_{k_{1},\ldots,k_{n}}A_{1}\otimes\cdots\otimes A_{n}$ is a Kempf-Weyman desingularization of $\widehat{\mbox{\it Sub}}_{k_{1},\ldots,k_{n}}A_{1}\otimes\cdots\otimes A_{n}$.

We have the following diagram:
\begin{equation} \label{weyman desing}
\begin{array}{ccccc}&&\mathcal{S}_{1}\otimes\cdots\otimes \mathcal{S}_{n}\\
& \swarrow{p} && \searrow{q} \\\ G(k_{1},A_{1})\times\cdots\times
G(k_{n},A_{n})&&&&\widehat{\mbox{\it Sub}}_{k_{1},\ldots,k_{n}}A_{1}\otimes\cdots\otimes
A_{n}
\end{array}
\end{equation}
From the Kempf-Weyman desingularization described in Chapter $7.2$ of \cite{Weyman}, and using Lemma \ref{grassman lemma} for $B$, for general $\varphi\in A'_{1}\otimes\ldots\otimes A'_{n}$, we deduce Lemma \ref{tangentspace}.
\end{proof}
\begin{prop}\label{def of many join}
Let $A_{i}$ be complex vector spaces, for all $1\leq i\leq n$; The join $\textbf{J}(\mbox{\it Sub}_{k^{1}_{1},\ldots,k^{1}_{n}}(A_{1}\otimes\cdots\otimes
A_{n}),\ldots,\mbox{\it Sub}_{k^{m}_{1},\ldots,k^{m}_{n}}(A_{1}\otimes\cdots\otimes
A_{n}))$

$(\iota)$ is non-defective if $\textrm{dim}\ A_{i}\geq \sum_{1\leq
j\leq m}k^{j}_{i},\ 1\leq j\leq m$, for each $1\leq i\leq n$;

$(\iota\iota)$ is defective if $\prod_{1\leq
i\leq n}\textrm{dim}\ A_{i}< \sum_{1\leq j\leq m}\prod_{1\leq
i\leq n}k^{j}_{i}$;

Also

$(\iota\iota\iota)$ the defect of the join is at least $\sum_{1\leq j\leq
m}\prod_{1\leq i\leq n}k^{j}_{i}-\prod_{1\leq i\leq
n}\textrm{dim}\ A_{i}$.
\end{prop}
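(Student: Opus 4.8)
The plan is to compute the dimension of the join via Terracini's Lemma (Theorem \ref{ter lem}) together with the explicit tangent space formula of Lemma \ref{tangentspace}, and then compare with the expected dimension. Pick general points $\varphi_{j}\in \widehat{\mbox{\it Sub}}_{k^{j}_{1},\ldots,k^{j}_{n}}(A_{1}\otimes\cdots\otimes A_{n})$, each supported on a general choice of subspaces $A'^{(j)}_{i}\subset A_{i}$ with $\dim A'^{(j)}_{i}=k^{j}_{i}$. By Terracini, $\hat{T}_{[P]}\textbf{J}=\sum_{j}\hat{T}_{[\varphi_{j}]}\mbox{\it Sub}_{k^{j}_{1},\ldots,k^{j}_{n}}$, and by Lemma \ref{tangentspace} each summand equals
\begin{align*}
\hat{T}_{[\varphi_{j}]}\mbox{\it Sub}_{k^{j}_{1},\ldots,k^{j}_{n}}=\Big(\bigotimes_{i}A'^{(j)}_{i}\Big)+\sum_{1\leq i\leq n}\big(A_{i}\otimes \varphi_{j}(A'^{(j)*}_{i})\big).
\end{align*}

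For part $(\iota)$ I would show non-defectivity by exhibiting one set of general points whose tangent spaces sum to the expected dimension; by the semicontinuity remark after the definition of defect, this suffices. Under the hypothesis $\dim A_{i}\geq \sum_{j}k^{j}_{i}$ one can choose the subspaces $A'^{(j)}_{i}$ to be pairwise complementary in $A_{i}$ (coordinate subspaces spanned by disjoint blocks of basis vectors). Writing everything in a compatible basis, each tangent space becomes a coordinate subspace of the tensor space $A_{1}\otimes\cdots\otimes A_{n}$, and I would verify that with the disjoint-block choice the summands meet only in the expected overlaps, so that $\dim \sum_{j}\hat{T}_{[\varphi_{j}]}$ attains $\min\{\sum_{j}(d_{j}+1)-1,\ \prod_{i}\dim A_{i}\}$ where $d_{j}=\dim \mbox{\it Sub}_{k^{j}_{1},\ldots,k^{j}_{n}}$. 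Tracking these intersections is where the real bookkeeping lies: the difficulty is that the tangent space of a single subspace variety is already a sum of $n+1$ coordinate blocks that overlap among themselves, so the intersection pattern across the $j$ summands is genuinely combinatorial rather than a naive direct sum.

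For parts $(\iota\iota)$ and $(\iota\iota\iota)$ I would argue by a dimension count on the desingularization rather than on tangent spaces. The key observation is that $\textbf{J}$ is dominated by an incidence variety built from the product of the desingularizing spaces $\mathcal{S}^{(j)}=\mathcal{S}^{(j)}_{1}\otimes\cdots\otimes\mathcal{S}^{(j)}_{n}$ appearing in diagram \eqref{weyman desing}: a general point of $\widehat{\textbf{J}}$ is $\varphi_{1}+\cdots+\varphi_{m}$ with $\varphi_{j}\in E^{(j)}_{1}\otimes\cdots\otimes E^{(j)}_{n}$ for some choice of planes $E^{(j)}_{i}\in G(k^{j}_{i},A_{i})$. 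Hence $\dim\widehat{\textbf{J}}\leq \sum_{j}\big(\dim B_{j}+\prod_{i}k^{j}_{i}\big)$ is dominated by the total fiber dimension, and more simply $\dim\widehat{\textbf{J}}\leq \prod_{i}\dim A_{i}$ since $\textbf{J}$ sits inside the ambient projective space. Comparing $\prod_{i}\dim A_{i}$ with the expected affine cone dimension $\sum_{j}(d_{j}+1)$ and subtracting gives exactly the defect bound in $(\iota\iota\iota)$, namely at least $\sum_{j}\prod_{i}k^{j}_{i}-\prod_{i}\dim A_{i}$; part $(\iota\iota)$ is then the statement that this bound is strictly positive. For this I need the elementary inequality that each $\dim\widehat{\mbox{\it Sub}}_{k^{j}_{1},\ldots,k^{j}_{n}}\geq \prod_{i}k^{j}_{i}$, which holds because the subspace variety contains the linear space $A'^{(j)}_{1}\otimes\cdots\otimes A'^{(j)}_{n}$ of that affine dimension. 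The main obstacle overall is the intersection-counting in $(\iota)$; parts $(\iota\iota)$–$(\iota\iota\iota)$ follow from comparing the ambient dimension $\prod_{i}\dim A_{i}$ against the expected dimension and require only the containment-based lower bound on each factor's dimension.
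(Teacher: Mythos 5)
Your handling of $(\iota\iota)$ and $(\iota\iota\iota)$ is correct, and it is actually tidier than the paper's own write-up: the paper routes the count through a nontrivial intersection of two core spaces $A^{t_{1}}_{1}\otimes\cdots\otimes A^{t_{1}}_{n}$ and $A^{t_{2}}_{1}\otimes\cdots\otimes A^{t_{2}}_{n}$, but the bound it finally asserts is exactly your comparison: since the cone over the join lies in $A_{1}\otimes\cdots\otimes A_{n}$ and each $\widehat{\mbox{\it Sub}}_{k^{j}_{1},\ldots,k^{j}_{n}}$ contains the linear space $A'^{(j)}_{1}\otimes\cdots\otimes A'^{(j)}_{n}$, the defect $\sum_{j}\bigl(d_{j}+1\bigr)-\dim\widehat{\textbf{J}}$ is at least $\sum_{j}\prod_{i}k^{j}_{i}-\prod_{i}\dim A_{i}$, which also gives $(\iota\iota)$. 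Your incidence-variety bound via the desingularization is true but never used; the ambient-space bound does all the work.

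The genuine gap is in $(\iota)$. You choose exactly the paper's configuration (split $A_{i}=A^{1}_{i}\oplus\cdots\oplus A^{m}_{i}$ with $\dim A^{j}_{i}=k^{j}_{i}$, put $\varphi_{j}$ in the $j$-th diagonal block, invoke Terracini and Lemma \ref{tangentspace}, finish by semicontinuity), but the decisive step --- that the $m$ tangent spaces meet pairwise in $\{0\}$, so their sum is direct and of the expected dimension --- is only promised, and your closing remark that the intersection pattern is ``genuinely combinatorial rather than a naive direct sum'' points in the wrong direction: with the disjoint-block choice the sum over $j$ \emph{is} a naive direct sum, and the one idea your proposal lacks is the reason why. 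Grade $A_{1}\otimes\cdots\otimes A_{n}$ by the blocks $A^{u_{1}}_{1}\otimes\cdots\otimes A^{u_{n}}_{n}$, indexed by tuples $(u_{1},\ldots,u_{n})$. Because $\varphi_{t}(A^{t*}_{s})\subset A^{t}_{1}\otimes\cdots\otimes\widehat{A^{t}_{s}}\otimes\cdots\otimes A^{t}_{n}$, Lemma \ref{tangentspace} shows $\hat{T}_{\varphi_{t}}$ lies in the span of those blocks whose tuple differs from the constant tuple $(t,\ldots,t)$ in at most one slot. For $t\neq t'$ and $n\geq 3$ no tuple can be within one slot of both constant tuples (it would agree with each in at least $n-1\geq 2$ slots, impossible since $2(n-1)>n$), so these spans --- hence the tangent spaces --- intersect in $\{0\}$, the sum is direct, and semicontinuity concludes. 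Note that this argument, your promised verification, and indeed the proposition itself all require $n\geq 3$: for $n=2$ the block indexed by $(t,t')$ lies in both spans, and the statement fails, e.g.\ $\sigma_{2}(\mbox{\it Sub}_{1,1}(\mathbb{C}^{2}\otimes\mathbb{C}^{2}))=\mathbb{P}^{3}$ has positive defect in the paper's sense even though condition $(\iota)$ holds --- a hypothesis the paper's own proof also uses silently.
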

\begin{proof}
($\iota$) If $\textrm{dim}\ A_{i}\geq \sum_{1\leq
j\leq m}k^{j}_{i},\ 1\leq j\leq m$, without loss of generality, we assume equality holds. Splitting
$A_{i}=A^{1}_{i}\oplus\cdots\oplus A^{m}_{i}$, for each $1\leq i\leq n$, such that
$\dim A^{j}_{i}=k^{j}_{i}$ and $\dim
A_{i}=\sum_{1\leq j\leq m}k^{j}_{i}$, and further
taking $\varphi_{t}\in A^{t}_{1}\otimes\cdots\otimes
A^{t}_{n}, 1\leq t\leq m$, it follows from Lemma
\ref{tangentspace} that
\begin{align*}
&\hat{T}_{\varphi_{t}}(\widehat{\mbox{\it Sub}}_{k^{t}_{1},\ldots,k^{t}_{n}}(A_{1}\otimes...\otimes
A_{n}))\\&=(A^{t}_{1}\otimes\cdots\otimes A^{t}_{n})\bigoplus_{1\leq
s\leq n} ((A^{1}_{s}\oplus\cdots\oplus
\widehat{A^{t}_{s}}\oplus\cdots\oplus A^{m}_{s})\otimes
\varphi_{t}(A^{t*}_{s})
)\\& \subset (A^{t}_{1}\otimes\cdots\otimes
A^{t}_{n})\bigoplus_{1\leq s\leq n} ((A^{1}_{s}\oplus\cdots\oplus
\widehat{A^{t}_{s}}\oplus\ldots\oplus A^{m}_{s})\otimes
A^{t}_{1}\otimes\cdots\otimes \widehat{A^{t}_{s}}\otimes\cdots\otimes
A^{t}_{n}).
\end{align*}
Since
\begin{align*}
&(A^{t}_{1}\otimes\cdots\otimes A^{t}_{n})\bigoplus_{1\leq s\leq n}
((A^{1}_{s}\oplus\cdots\oplus
\widehat{A^{t}_{s}}\oplus\cdots\oplus A^{m}_{s})\otimes
A^{t}_{1}\otimes\cdots\otimes
\widehat{A^{t}_{s}}\otimes\ldots\otimes A^{t}_{n})\\ &\bigcap
\sum_{t'\neq t}((A^{t'}_{1}\otimes\cdots\otimes
A^{t'}_{n})\bigoplus_{1\leq s\leq n} ((A^{1}_{s}\oplus\cdots\oplus
\widehat{A^{t'}_{s}}\oplus\cdots\oplus A^{m}_{s})\otimes
A^{t'}_{1}\otimes\cdots\otimes
\widehat{A^{t'}_{s}}\otimes\cdots\otimes A^{t'}_{n}))\\&=\{0\},
\end{align*}
we deduce
\begin{align*}
\hat{T}_{\varphi_{t}}(\widehat{\mbox{\it Sub}}_{k^{t}_{1},\ldots,k^{t}_{n}}(A_{1}\otimes\cdots\otimes
A_{n}))\bigcap \sum_{1\leq t'\neq t\leq m}
\hat{T}_{\varphi_{t'}}(\widehat{\mbox{\it Sub}}_{k^{t'}_{1},\ldots,k^{t'}_{n}}(A_{1}\otimes\cdots\otimes
A_{n}))=\{0\}.
\end{align*}

Using Theorem \ref{ter lem}, we have
\begin{align*}
&\hat{T}_{\sum_{1\leq t\leq m}
\varphi_{t}}(\textbf{J}(\widehat{\mbox{\it Sub}}_{k^{1}_{1},\ldots,k^{1}_{n}}(A_{1}\otimes\cdots\otimes
A_{n}),\ldots,\widehat{\mbox{\it Sub}}_{k^{m}_{1},\ldots,k^{m}_{n}}(A_{1}\otimes\cdots\otimes
A_{n})))\\=&\bigoplus_{1\leq t\leq m}\hat{T}_{\varphi_{t}}(\widehat{\mbox{\it Sub}}_{k^{t}_{1},\ldots,k^{t}_{n}}(A_{1}\otimes\cdots\otimes
A_{n})).
\end{align*}
Thus $\textbf{J}(\mbox{\it Sub}_{k^{1}_{1},\ldots,k^{1}_{n}}(A_{1}\otimes\cdots\otimes
A_{n}),\ldots,Sub_{k^{m}_{1},\ldots,k^{m}_{n}}(A_{1}\otimes\cdots\otimes
A_{n}))$ is non-defective.

(Proof of $\iota\iota-\iota\iota\iota$) When $\prod_{1\leq i\leq n}\textrm{dim}\ A_{i}< \sum_{1\leq
j\leq m}\prod_{1\leq i\leq n}k^{j}_{i}$, we have
\begin{align*}
\hat{T}_{\varphi_{t}}(\mbox{\it Sub}_{k^{t}_{1},\ldots,k^{t}_{n}}(A_{1}\otimes\cdots\otimes
A_{n}))=(A^{t}_{1}\otimes\cdots\otimes A^{t}_{n})+\sum_{1\leq s\leq
n} (A^{s}\otimes \varphi_{t}(A^{s*})),
\end{align*}
where $\varphi_{t}\in A^{t}_{1}\otimes\cdots\otimes
A^{t}_{n}, 1\leq t\leq m$.

And there exists $t_{1},t_{2}\in \{1, \ldots, m\}$, such that
\begin{align*}
(A^{t_{1}}_{1}\otimes\cdots\otimes A^{t_{1}}_{n})\cap (A^{t_{2}}_{1}\otimes\cdots\otimes A^{t_{2}}_{n})\neq\{0\},
\end{align*}
which implies
\begin{align*}
\hat{T}_{\varphi_{t_{1}}}(\mbox{\it Sub}_{k^{t_{1}}_{1},\ldots,k^{t_{1}}_{n}}(A_{1}\otimes\cdots\otimes
A_{n}))\cap \hat{T}_{\varphi_{t_{2}}}(\mbox{\it Sub}_{k^{t_{2}}_{1},\ldots,k^{t_{2}}_{n}}(A_{1}\otimes\cdots\otimes
A_{n}))\neq\{0\}.
\end{align*}
Therefore,
$\textbf{J}(\mbox{\it Sub}_{k^{1}_{1},\ldots,k^{1}_{n}}(A_{1}\otimes\cdots\otimes
A_{n}),\ldots,\mbox{\it Sub}_{k^{m}_{1},\ldots,k^{m}_{n}}(A_{1}\otimes\cdots\otimes
A_{n}))$ is defective with defect at least $\sum_{1\leq j\leq
m}\prod_{1\leq i\leq n}k^{j}_{i}-\prod_{1\leq i\leq
n}\dim A_{i}$.
\end{proof}
\begin{prop}\label{defect of 2 join}
Let $A,B$ and $C$ be complex vector spaces of dimensions $a,b,c$ respectively. If
\begin{align*}
&a'\leq (b-b')(c-c'),\ b'\leq (a-a')(c-c'),\
c'\leq (a-a')(b-b'),\\&a''\leq (b-b'')(c-c''),\ b''\leq (a-a'')(c-c''),\ c''\leq
(a-a'')(b-b''),
\end{align*}
then $\textbf{J}(Sub_{a',b',c'}(A\otimes B\otimes
C),Sub_{a'',b'',c''}(A\otimes B\otimes C))$
has defect $(a'+a''-a)^{+}(b'+b''-b)^{+}(c'+c''-c)^{+}$,
where $x^{+}=x$ if $x\geq 0$ and $0$ if $x<0$.
\end{prop}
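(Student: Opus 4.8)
The plan is to compute the dimension of the join by applying Terracini's Lemma (Theorem \ref{ter lem}), reducing the problem to finding the dimension of the sum of two tangent spaces at general points of the two subspace varieties, and then comparing this with the expected dimension of the join in $\mathbb{P}(A\otimes B\otimes C)$. The tangent space formula of Lemma \ref{tangentspace} provides the explicit description we need. First I would choose coordinates adapted to the two points: write $A = A_1'\cap A_2' \oplus A_1^{\circ}\oplus A_2^{\circ}\oplus \tilde{A}$, where $A_1', A_2'$ are the $a'$- and $a''$-dimensional subspaces realizing the two general tensors, and similarly decompose $B$ and $C$. The key arithmetic quantity that will govern the defect is the overlap dimension $(a'+a''-a)^{+}$ in the $A$-factor (and the analogous quantities for $B$ and $C$), since this measures how much the chosen subspaces are forced to intersect once they no longer fit independently inside the ambient space.

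Second, I would write out both tangent spaces via Lemma \ref{tangentspace}. For general $\varphi_1 \in A_1'\otimes B_1'\otimes C_1'$ of full multilinear rank $(a',b',c')$, the image spaces satisfy $\varphi_1(B_1'^*\otimes C_1'^*) = A_1'$, $\varphi_1(A_1'^*\otimes C_1'^*)=B_1'$, and $\varphi_1(A_1'^*\otimes B_1'^*)=C_1'$; this is where the hypothesis inequalities such as $a'\leq (b-b')(c-c')$ enter, as they guarantee that a general tensor in $A_1'\otimes B_1'\otimes C_1'$ does attain the full multilinear rank and lies in a smooth point, so that Lemma \ref{tangentspace} applies and the flattening maps are surjective onto the expected subspaces. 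The tangent space $\hat{T}_{\varphi_1}$ then becomes $A_1'\otimes B_1'\otimes C_1' + A\otimes B_1'\otimes C_1' + A_1'\otimes B\otimes C_1' + A_1'\otimes B_1'\otimes C$, and analogously for $\varphi_2$.

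Third, I would compute the dimension of the sum $\hat{T}_{\varphi_1}+\hat{T}_{\varphi_2}$ directly by decomposing $A\otimes B\otimes C$ into the tensor products of the chosen summands of $A$, $B$, $C$. Each such elementary block $X\otimes Y\otimes Z$ (with $X$ one of the $A$-summands, etc.) is either fully contained in the sum, contained in exactly one tangent space, or absent; summing the dimensions of the blocks that are hit, and subtracting from the expected dimension $\prod\dim A_i$ (when the join is expected to fill the space) or from $\dim\hat{T}_1+\dim\hat{T}_2$, will produce exactly the product $(a'+a''-a)^{+}(b'+b''-b)^{+}(c'+c''-c)^{+}$ as the deficiency. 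The point is that a block fails to be covered precisely when it lies in the "triple overlap" region in all three factors simultaneously, and the number of such uncovered coordinates is the product of the three overlap dimensions. By the semicontinuity remark following Theorem \ref{ter lem}, verifying the defect at this one explicit configuration of general points suffices.

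\emph{The hard part will be} the bookkeeping in the third step: organizing the block decomposition so that the inclusion–exclusion over the $A$-, $B$-, and $C$-decompositions is transparent, and correctly identifying which blocks survive in $\hat{T}_{\varphi_1}+\hat{T}_{\varphi_2}$. In particular, one must be careful that the genericity of $\varphi_1,\varphi_2$ really forces the flattening images to be the full prescribed subspaces (so the tangent spaces are as large as claimed), and that the overlap blocks are genuinely missed rather than reached through some cross-term; this is where a clean choice of the complementary decomposition $A = (A_1'\cap A_2')\oplus\cdots$ pays off, letting the final count collapse to the single product formula.
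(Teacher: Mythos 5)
Your overall strategy (Terracini's Lemma, the tangent-space lemma, an adapted direct-sum decomposition of $A,B,C$) is the same as the paper's, but the execution breaks at the first substantive step: the tangent space you write down is not the tangent space of the subspace variety. Lemma \ref{tangentspace} gives, for general $\varphi_{1}\in A'\otimes B'\otimes C'$,
\begin{align*}
\hat{T}_{\varphi_{1}}(\widehat{Sub}_{a',b',c'}(A\otimes B\otimes C))
=A'\otimes B'\otimes C'+A\otimes \varphi_{1}(A'^{*})+B\otimes \varphi_{1}(B'^{*})+C\otimes \varphi_{1}(C'^{*}),
\end{align*}
where $\varphi_{1}(A'^{*})\subset B'\otimes C'$ is the image of the flattening $A'^{*}\rightarrow B'\otimes C'$; for general $\varphi_{1}$ this map is \emph{injective} (its image has dimension $a'$), not surjective, whenever $a'\leq b'c'$ --- and under the hypotheses of the proposition one is typically in that regime (e.g.\ $a=b=c=4$, $a'=b'=c'=2$). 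You have confused this with the transpose flattening $B'^{*}\otimes C'^{*}\rightarrow A$, whose image is indeed all of $A'$, and as a result you replaced $\varphi_{1}(A'^{*})$ by all of $B'\otimes C'$. Your ``tangent space'' $A\otimes B'\otimes C'+A'\otimes B\otimes C'+A'\otimes B'\otimes C$ has dimension $a'b'c'+(a-a')b'c'+(b-b')a'c'+(c-c')a'b'$, strictly larger than $\dim \widehat{Sub}_{a',b',c'}=a'b'c'+(a-a')a'+(b-b')b'+(c-c')c'$, which is impossible for the tangent space at a smooth point. Since the defect equals $\dim(\hat{T}_{\varphi_{1}}\cap\hat{T}_{\varphi_{2}})$ (the affine tangent spaces at smooth points have the same dimension as the cones over the varieties), every count downstream is corrupted: with your inflated spaces the intersection already contains all of $A\otimes B_{0}\otimes C_{0}$ (where $B_{0},C_{0}$ are the overlaps), not just $A_{0}\otimes B_{0}\otimes C_{0}$, so the product formula cannot come out of your bookkeeping.

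The second gap is where you place the hypotheses. The inequalities $a'\leq(b-b')(c-c')$, etc., are not smoothness or full-multilinear-rank conditions (those would read $a'\leq b'c'$, $b'\leq a'c'$, $c'\leq a'b'$); they are codimension conditions that are consumed exactly at the step your sketch defers to ``the hard part.'' With the correct tangent spaces, the intersection $\hat{T}_{\varphi_{1}}\cap\hat{T}_{\varphi_{2}}$ can a priori pick up contributions from the cross terms such as $A_{2}\otimes\varphi_{1}(A'^{*})$, and these are \emph{not} coordinate blocks in any decomposition of $A\otimes B\otimes C$, so no inclusion--exclusion over blocks can decide whether they meet the other tangent space. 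The paper eliminates them by a general-position argument in the Grassmannian (Lemma \ref{mainlemma}): the bad locus $U=B_{0}\otimes C_{0}\oplus B_{1}\otimes C_{0}\oplus B_{0}\otimes C_{1}$ has codimension $(b-b')(c-c')\geq a'$ in $B'\otimes C'$, hence a general $a'$-plane $\varphi_{1}(A'^{*})$ avoids the Schubert cycle $\tau(U)$, forcing the cross-term contributions to vanish and pinning the intersection down to exactly $A_{0}\otimes B_{0}\otimes C_{0}$. That genericity argument is the actual content of Proposition \ref{defect of 2 join}, and the block-decomposition framework you set up cannot replace it.
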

For the proof of this Proposition, we need some preliminary considerations.

Let $A, B$ and $C$ be three complex vector spaces, of dimensions $a, b, c$ respectively, and further let $A$ be sum of two spaces $E_{A}$ and $F_{A}$, of
dimension $a'$, $a''$ respectively and $E_{A}\cap F_{A}=A_{0}$. Let $A_{1}$ and
$A_{2}$ respectively denote choices of complementary spaces in $E_{A}$
and $F_{A}$ respectively. The vector spaces $E_{B},\ F_{B},\
B_{0},\ B_{1},\ B_{2},$ and $E_{C},\ F_{C},\ C_{0},\ C_{1},\
C_{2}$ are defined in a similar manner. That is:
\begin{align}\label{3 vector space}
A&=A_{1}\oplus A_{0}\oplus A_{2},\ B=B_{1}\oplus B_{0}\oplus
B_{2},\
C=C_{1}\oplus C_{0}\oplus C_{2}.\nonumber\\
E_{A}&=A_{1}\oplus A_{0},\ F_{A}=A_{2}\oplus A_{0},\
\textrm{dim}\ E_{A}=a',\ \textrm{dim}\ F_{A}=a'',\\
E_{B}&=B_{1}\oplus B_{0},\ F_{B}=B_{2}\oplus B_{0},\
\textrm{dim}\ E_{B}=b',\ \textrm{dim}\ F_{B}=b'',\nonumber\\
E_{C}&=C_{1}\oplus C_{0},\ F_{C}=C_{2}\oplus C_{0},\ \textrm{dim}\
E_{C}=c',\ \textrm{dim}\ F_{C}=c''\nonumber.
\end{align}
Note that $a'+a''=a$, if and only if $A_{0}$ is ${0}$, and similarly for $B_{0},C_{0}$.
\begin{rem}\label{def 2 spec}
As a special case of Proposition \ref{def of many join}, if $a'+a''\leq a\ \textrm{and}\  b'+b''\leq b\ \textrm{and}\
c'+c''\leq c$, $\textbf{J}(Sub_{a',b',c'}(A\otimes B\otimes
C),Sub_{a'',b'',c''}(A\otimes B\otimes C))$ is non-defective.
\end{rem}
\begin{lem}\label{mainlemma}
There exist rational maps
\begin{align*}
f: E\otimes V\rightarrow G(e,V),
\end{align*}
where $\dim E=e$, such that for $\varphi\in E\otimes V$ and $\varphi: E^{*}\rightarrow V$ is injective, we have
\begin{align*}
f(\varphi)=\varphi(E^{*})\subset V,
\end{align*}
and the open subset
\begin{align*}
U:=\{\varphi|\varphi:E^{*}\rightarrow V\  \rm is\
\rm injective\}
\end{align*}
is the locus where $f$ is regular.
\end{lem}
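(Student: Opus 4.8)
The plan is to exhibit $f$ as the composition of a single homogeneous polynomial map with the inverse of the Pl\"ucker embedding, so that every assertion reduces to elementary facts about maximal minors. Recall the Pl\"ucker embedding $P\colon G(e,V)\hookrightarrow \mathbb{P}(\Lambda^{e}V)$, which sends an $e$-plane $W=\langle w_{1},\ldots,w_{e}\rangle$ to the line $[w_{1}\wedge\cdots\wedge w_{e}]$ and whose image is exactly the locus of classes of nonzero decomposable vectors. Since $P$ is a closed embedding, $P^{-1}$ is a morphism on that image, so it suffices to build a rational map $g\colon E\otimes V\to \mathbb{P}(\Lambda^{e}V)$ that lands in $P(G(e,V))$ on the relevant locus and then set $f:=P^{-1}\circ g$.

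To build $g$, I would regard $\varphi\in E\otimes V$ as the linear map $\varphi\colon E^{*}\to V$ of Definition \ref{tensor hom} and pass to its $e$-th exterior power $\Lambda^{e}\varphi\colon \Lambda^{e}E^{*}\to \Lambda^{e}V$. As $\dim E^{*}=e$, the source $\Lambda^{e}E^{*}$ is one dimensional; fixing a generator $\omega=e_{1}^{*}\wedge\cdots\wedge e_{e}^{*}$ produces a single vector $(\Lambda^{e}\varphi)(\omega)=\varphi(e_{1}^{*})\wedge\cdots\wedge\varphi(e_{e}^{*})\in \Lambda^{e}V$, whose coordinates are the $e\times e$ minors of a matrix of $\varphi$, hence homogeneous of degree $e$ in the entries of $\varphi$. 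Replacing $\omega$ by another generator only rescales this vector, so $g(\varphi):=[(\Lambda^{e}\varphi)(\omega)]$ is a well-defined rational map to $\mathbb{P}(\Lambda^{e}V)$.

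Next I would identify $g$ with the subspace map. For $\varphi\in U$ the images $v_{i}:=\varphi(e_{i}^{*})$ are linearly independent, so $(\Lambda^{e}\varphi)(\omega)=v_{1}\wedge\cdots\wedge v_{e}$ is a nonzero decomposable vector, its class lies in $P(G(e,V))$, and $P^{-1}(g(\varphi))=\langle v_{1},\ldots,v_{e}\rangle=\varphi(E^{*})$; thus $f=P^{-1}\circ g$ is regular on $U$ and satisfies $f(\varphi)=\varphi(E^{*})$ there. Conversely, if $\varphi\notin U$ then $\varphi\colon E^{*}\to V$ has rank $<e$, every maximal minor vanishes, and the chosen homogeneous representative of $g$ is identically zero at $\varphi$, so this representative does not define $g$ there.

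The genuine content, and the step I expect to be the main obstacle, is to upgrade this last remark to the assertion that $U$ is \emph{exactly} the regular locus, i.e. that no representative of $g$ extends it across a point $\varphi_{0}$ with $\mathrm{rank}\,\varphi_{0}<e$. Here I would argue that the maximal minors have no common polynomial factor: their common zero locus is the determinantal variety $\set{\varphi:\mathrm{rank}\,\varphi<e}$, whose codimension $\dim V-e+1$ is at least $2$ when $\dim V>e$, so a common irreducible factor would force a hypersurface to lie inside a variety of codimension $\geq 2$, which is impossible. Coprimality persists in the local ring $\mathcal{O}_{E\otimes V,\varphi_{0}}$ (a unique factorization domain), and the standard UFD argument then shows that any tuple of functions defining the same rational map near $\varphi_{0}$ is of the form $h\cdot((\Lambda^{e}\varphi)(\omega))$ for a single $h$, hence vanishes at $\varphi_{0}$; therefore $g$, and with it $f$, is not regular at $\varphi_{0}$. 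I expect this coprimality/no-cancellation step to be the crux, since the mere vanishing of the Pl\"ucker coordinates off $U$ does not by itself forbid a regular extension. (When $\dim V=e$ the variety $G(e,V)$ is a single point and $f$ is a constant morphism, so the description of the regular locus is understood for the case $\dim V>e$, which is the one used later.)
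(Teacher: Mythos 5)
Your proof is correct, but it takes a genuinely different route from the paper's. The paper works with the $GL(e)$-action on $\textrm{Mat}(e,v)$: on the affine set where an $e\times e$ submatrix $X_{I}$ is invertible it represents the orbit of $X$ by the row-reduced matrix $(X_{I})^{-1}X$, whose free entries are $GL(e)$-invariant rational functions giving coordinates on the corresponding affine chart of $G(e,v)$; covering $U$ by such charts exhibits $f$ as a rational map, regular on $U$, with $f(U)$ open in $G(e,v)$. You instead send $\varphi$ to the class $[\Lambda^{e}\varphi(\omega)]$ of its maximal minors and compose with the inverse of the Pl\"ucker embedding. The two constructions agree on $U$, and either one suffices for what is actually used later in the proof of Proposition \ref{defect of 2 join}, namely a dominant rational map under which the preimage of the complement of the Schubert variety $\tau(U)$ is open and dense. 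What your argument buys is the sharper half of the lemma, that $U$ is \emph{exactly} the regular locus: your coprimality-plus-UFD step (the minors cut out the locus $\mathrm{rank}\,\varphi<e$, of codimension $\dim V-e+1\geq 2$, hence admit no common factor, so no representative of the map extends across a rank-deficient point) genuinely proves non-extendability, whereas the paper's proof only establishes regularity on $U$ and openness of the image and never addresses points outside $U$. You are also right to flag the degenerate case $\dim V=e$, where $G(e,V)$ is a point and $f$ extends as a constant morphism, so the exact-locus claim as literally stated requires $\dim V>e$; the paper does not remark on this. What the paper's approach buys in exchange is explicitness: the chart representatives $(X_{I})^{-1}X$ display the Grassmannian coordinates directly, in the form most convenient for the tangent-space computations elsewhere in the paper.
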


\begin{proof}
Let $\dim V=v$. The image $f(u)$ is the $GL(e)$-orbit space of $
\textrm{Mat}(e,v)$, where $\textrm{Mat}(e,v)$ denotes matrices of size $e\times v$. For example, when $I= \{1,\ldots,e\}, X
\in \textrm{Mat}(e,v)$, each orbit in the affine open set is
uniquely represented by a matrix

\begin{align*}
(X_{I})^{-1}X=
\begin{bmatrix}
1 & 0 & \ldots & 0 & * &\ldots & * \\
0 & 1 & \ldots & 0 & * & \ldots & *\\
\vdots & \vdots & \ddots & \vdots &\vdots & \vdots & \vdots \\
0 & \ldots & 0 & 1 & * & \ldots & *
\end{bmatrix}
\end{align*}
in which the $e(v-e)$ entries $*$ serve as coordinates on
$\mathbb{C}^{e(v-e)}$. Note that each $*$ is a $GL(a')$-
invariant rational form on Mat$(e,v)$. This orbit coincides
with a $e$-dimensional subspaces of a fixed $v$-dimensional
vector space, which is $G(e,v)$. Therefore $f(U)\subseteq G(e,v)$ is open.
\end{proof}

\subsection{Proof of Proposition \ref{defect of 2 join}}
By \eqref{3 vector space} and using Lemma \ref{tangentspace}, for general $\varphi_{E}\in
E_{A}\otimes E_{B}\otimes E_{C},\ \varphi_{F}\in F_{A}\otimes
F_{B}\otimes F_{C}$, we have
\begin{align}\label{tan E}
&\hat{T}_{\varphi_{E}}(\widehat{Sub}_{a',b',c'}(A\otimes B\otimes
C))=E_{A}\otimes E_{B}\otimes E_{C}\\& \oplus A_{2}\otimes
\varphi_{E}(E^{*}_{A})(\subset A_{2}\otimes E_{B}\otimes E_{C})\nonumber\\&
\oplus B_{2}\otimes \varphi_{1}(E^{*}_{B})(\subset B_{2}\otimes
E_{A}\otimes E_{C})\nonumber\\&
\oplus C_{2}\otimes \varphi_{1}(E^{*}_{C})(\subset C_{2}\otimes E_{A}\otimes E_{B}),\nonumber
\end{align}
and similarly
\begin{align}\label{tan F}
&\hat{T}_{\varphi_{F}}(\widehat{Sub}_{a'',b'',c''}(A\otimes B\otimes
C))=F_{A}\otimes F_{B}\otimes F_{C}\\& \oplus A_{1}\otimes
\varphi_{F}(F^{*}_{A})(\subset A_{1}\otimes F_{B}\otimes F_{C})\nonumber\\&
\oplus B_{1}\otimes \varphi_{F}(F^{*}_{B})(\subset B_{1}\otimes
F_{A}\otimes F_{C})\nonumber\\& \oplus C_{1}\otimes
\varphi_{F}(F^{*}_{C})(\subset C_{1}\otimes F_{A}\otimes F_{B}).\nonumber
\end{align}
Therefore
\begin{align}\label{specialequation}
&A_{0}\otimes B_{0}\otimes C_{0}\subset \nonumber
\\&\hat{T}_{\varphi_{E}}(\widehat{Sub}_{a',b',c'}(A\otimes B\otimes C))
\cap \hat{T}_{\varphi_{F}}(\widehat{Sub}_{a'',b'',c''}(A\otimes B\otimes C))\\
&\subset (A_{0}\otimes B_{0}\otimes C_{0}) \oplus (A_{1}\otimes
B_{0}\otimes C_{0}) \oplus (A_{2}\otimes B_{0}\otimes C_{0}) \oplus
(A_{0}\otimes B_{2}\otimes C_{0})\nonumber\\
&\oplus (A_{0}\otimes B_{0}\otimes C_{1}) \oplus (A_{0}\otimes
B_{0}\otimes C_{2}) \oplus (A_{2}\otimes B_{0}\otimes C_{1})\oplus
(A_{1}\otimes B_{2}\otimes C_{0})\nonumber\\ &\oplus (A_{2}\otimes
B_{1}\otimes C_{0}) \oplus (A_{1}\otimes B_{0}\otimes C_{2}) \oplus
(A_{0}\otimes B_{1}\otimes C_{2})\oplus (A_{0}\otimes B_{2}\otimes
C_{2})\nonumber\\& \oplus (A_{0}\otimes B_{1}\otimes C_{0})\nonumber.
\end{align}
We first need to prove that the first inclusion in \eqref{specialequation} is actually an equality.
For this purpose, we want to choose sufficiently general $\varphi_{E},\
\varphi_{F}$ to avoid a possible larger intersection of \eqref{tan E} and \eqref{tan F}.

Let $p,\ p'$ be general elements in
$\hat{T}_{\varphi_{E}}(\widehat{Sub}_{a',b',c'}(A\otimes B\otimes C))
\cap \hat{T}_{\varphi_{F}}(\widehat{Sub}_{a'',b'',c''}(A\otimes B\otimes
C))$, and use \eqref{tan E} and \eqref{tan F} to represent $p,p'$ respectively as
\begin{align*}
&p= v_{0}+v_{1}+v_{2}+v_{3},\ \textrm{with} \\
&v_{0}\in E_{A}\otimes E_{B}\otimes E_{C},\ v_{1}\in A_{2}\otimes
\varphi_{E}(E^{*}_{A}), \\&v_{2}\in B_{2}\otimes
\varphi_{E}(E^{*}_{B}),\ v_{3}\in C_{2}\otimes
\varphi_{E}(E^{*}_{C});
\end{align*}
and
\begin{align*}
&p'= v'_{0}+v'_{1}+v'_{2}+v'_{3},\ \textrm{with} \\
&v'_{0}\in F_{A}\otimes F_{B}\otimes F_{C},\ v'_{1}\in
A_{1}\otimes \varphi_{F}(F^{*}_{A}), \\&v'_{2}\in B_{1}\otimes
\varphi_{F}(F^{*}_{B}),\ v'_{3}\in C_{1}\otimes
\varphi_{F}(F^{*}_{C}).
\end{align*}
From \eqref{specialequation}, we have
\begin{align*}
&v_{1}\in A_{2}\otimes (B_{0}\otimes C_{0}\oplus B_{1}\otimes
C_{0}\oplus B_{0}\otimes C_{1}),
\end{align*}
and hence
\begin{align*}
v_{1}\in (A_{2}\otimes \varphi_{E}(A^{*}))\bigcap (A_{2}\otimes
(B_{0}\otimes C_{0}\oplus B_{1}\otimes C_{0}\oplus B_{0}\otimes
C_{1})).
\end{align*}.

Now consider
\begin{align*}
&\varphi_{E}(A^{*})\subseteq E_{B}\otimes E_{C} \ \textrm{and} \
U=(B_{0}\otimes C_{0})\oplus (B_{1}\otimes C_{0})\oplus
(B_{0}\otimes C_{1}),
\end{align*}
and note that the codimension of $U$ in $E_{B}\otimes E_{C}$ is $(b-b')(c-c')\geq
a'$. We also consider the Schubert subvariety
\begin{align*}
\tau(U):= \{ E\in G(a',b'c')\mid E\cap U\neq \{0\}\},
\end{align*}
which has codimension $(b-b')(c-c')-a'+1$ in $G(a',b'c')$.

By virtue of Lemma \ref{mainlemma}, and note that in this case, $E=E_{A}$, $V=E_{B}\otimes E_{C}$,
letting $\varphi=\varphi_{E}^{A^{*}}$, for general $\varphi_{E}^{A^{*}}$, we have
\begin{align*}
f(\varphi_{E}^{A^{*}}) \cap U = \{0\}.
\end{align*}

But the image $f_{E}(\varphi_{E}^{A^{*}})$ is in the complement of variety $\tau(U)$. Therefore, we obtained in
this way a Zariski-open dense set of general $\varphi_{E}^{A^{*}}$.
In the same way, we can obtain a Zariski-open dense sets of
general $\varphi_{E}^{B^{*}},\ \varphi_{E}^{C^{*}}$. It follows that
\begin{align*}
&(A_{2}\otimes \varphi_{E}(A^{*}))\bigcap (A_{2}\otimes (B_{0}\otimes
C_{0}\oplus B_{1}\otimes C_{0}\oplus B_{0}\otimes C_{1}))= \{0\},
\end{align*}
and in consequence $v_{1}=\{0\}$. Similarly, we have
$v_{2}=\{0\},\ v_{3}=\{0\}$; for the same reason, $v'_{1}=\{0\}$, $v'_{2}=\{0\},\ v'_{3}=\{0\}$.
Taking the intersection of those $\varphi_{E}^{A^{*}},\
\varphi_{E}^{B^{*}},\ \varphi_{E}^{C^{*}}$, we obtain
$\varphi_{E}$. In the same way, we get $\varphi_{F}$,
that give rise to
\begin{align*}
p= v_{0}\in E_{A}\otimes E_{B}\otimes E_{C},
\end{align*}
and respectively
\begin{align*}
p'= v'_{0}\in F_{A}\otimes F_{B}\otimes F_{C}.
\end{align*}
Therefore, using \eqref{3 vector space}, we obtain
\begin{align*}
&\hat{T}_{\varphi_{E}}(\widehat{Sub}_{a',b',c'}(A\otimes B\otimes C))\cap
\hat{T}_{\varphi_{F}}(\widehat{Sub}_{a',b',c'}(A\otimes B\otimes
C))\\&=A_{0}\otimes B_{0}\otimes C_{0}.
\end{align*}

By virtue of \eqref{specialequation}, $A_{0}\otimes B_{0}\otimes
C_{0}$ is the intersection of
$\hat{T}_{\varphi_{E}}(\widehat{Sub}_{a',b',c'}(A\otimes
B\otimes C))$ and $\hat{T}_{\varphi_{F}}(\widehat{Sub}_{a',b',c'}(A\otimes
B\otimes C))$ for general $\varphi_{E}$ and $\varphi_{F}$, and this completes the proof of Theorem \ref{defect of 2 join}.

\section{Some Examples}
In this section, we exhibit several examples to clarify both the basic concepts introduced in the previous sections as well as the relations between them.

First, we exhibit an example of a defective secant variety for which the defect can be computed directly according to the formula provided by Proposition \ref{defect of 2 join}, although the conditions in that proposition are not justified.
\begin{ex}
The secant variety $\sigma_{2}(Sub_{2,3,6}(\mathbb{C}^3\otimes \mathbb{C}^5\otimes
\mathbb{C}^{11}))$ has defect $1$.
\end{ex}
\begin{proof}
In order to facilitate our exposition, let $A,B$ and $C$ be complex vector spaces of dimensions $3, 5, 11$ respectively.

First, note that for
$\varphi_{1}\in A'\otimes B'\otimes C'$ and $\varphi_{2}\in A''\otimes B''\otimes C''$,
where $A', A''$ are $2$ dimensional subspaces of $A$; $B',B''$ are $3$ dimensional subspaces of $B$
and $C',C''$ are $6$ dimensional subspaces of $C$.

Since $\textrm{dim}\ A'\cap A''\geq 1,\ \textrm{dim}\ B'\cap
B''\geq 1,\ \textrm{dim}\ C'\cap C''\geq 1$, from Lemma \ref{tangentspace}, we have
\begin{align*}
&\dim
\hat{T}_{\varphi_{1}}(\widehat{Sub}_{2,3,6}(\mathbb{C}^3\otimes
\mathbb{C}^5\otimes \mathbb{C}^{11}))\cap
\hat{T}_{\varphi_{2}}(\widehat{Sub}_{2,3,6}(\mathbb{C}^3\otimes
\mathbb{C}^5\otimes \mathbb{C}^{11}))\\ &\geq \textrm{dim}\
(A'\otimes B'\otimes C')\cap (A''\otimes B''\otimes C'')\geq 1.
\end{align*}
So if there exists a pair of points on $\sigma_{2}(Sub_{2,3,6}(\mathbb{C}^3\otimes \mathbb{C}^5\otimes
\mathbb{C}^{11}))$, whose tangent spaces has a one dimensional intersection, we can claim the defect is exactly $1$.

Choose now a pair of tensors $\{\varphi_{1},\varphi_{2}\}$, such that
\begin{align*}
\varphi_{1}=a_{1}\otimes (b_{1}\otimes c_{1}+b_{2}\otimes
c_{2}+b_{3}\otimes c_{3})+a_{2}\otimes (b_{2}\otimes
c_{2}-b_{3}\otimes c_{3})\in E_{A}\otimes E_{B}\otimes E_{C},
\end{align*}
\begin{align*}
\varphi_{2}=a_{3}\otimes (b_{2}\otimes c_{2}+b_{4}\otimes
c_{4}+b_{5}\otimes c_{5})+a_{2}\otimes (b_{2}\otimes
c_{2}-b_{5}\otimes c_{5})\in F_{A}\otimes F_{B}\otimes F_{C},
\end{align*}
where $\{a_{1},a_{2},a_{3}\}$,
$\{b_{1},\ldots,b_{5}\}$ and $\{c_{1},\ldots,c_{11}\}$ are fixed bases for
$A,B$ and $C$, and $E_{A}=\langle a_{1},a_{2}\rangle,\ F_{A}=\langle
a_{2},a_{3}\rangle,\ E_{B}=\langle b_{1},b_{2},b_{3}\rangle,\
F_{B}=\langle b_{2},b_{4},b_{5}\rangle,\ E_{C}=\langle
c_{1},\ldots,c_{6}\rangle,\ F_{C}=\langle
c_{2},c_{7},\ldots,c_{11}\rangle$.

It is clear that
\begin{align*}
&\hat{T}_{\varphi_{1}}(\widehat{Sub}_{2,3,6}(\mathbb{C}^3\otimes
\mathbb{C}^5\otimes \mathbb{C}^{11}))\\&=(E_{A}\otimes E_{B}\otimes
E_{C})\\ &+(A\otimes \langle b_{1}\otimes c_{1}+b_{2}\otimes
c_{2}+b_{3}\otimes c_{3}\rangle)\\ &+ (B\otimes \langle a_{1}\otimes
c_{1},a_{1}\otimes c_{2}+a_{2}\otimes c_{2},a_{1}\otimes
c_{3}-a_{2}\otimes c_{3}\rangle)\\&+ (C\otimes \langle a_{1}\otimes
b_{1},a_{1}\otimes b_{2}+a_{2}\otimes b_{2}, a_{1}\otimes
b_{3}-a_{2}\otimes b_{3}\rangle),
\end{align*}
and similarly
\begin{align*}
&\hat{T}_{\varphi_{2}}(\widehat{Sub}_{2,3,6}(\mathbb{C}^3\otimes
\mathbb{C}^5\otimes \mathbb{C}^{11}))\\&=(F_{A}\otimes F_{B}\otimes
F_{C})\\ &+ (A\otimes \langle b_{2}\otimes c_{2}+b_{4}\otimes
c_{4}+b_{5}\otimes c_{5}\rangle)\\ &+ (B\otimes \langle a_{3}\otimes
c_{4},a_{3}\otimes c_{2}+a_{2}\otimes c_{2},a_{3}\otimes
c_{5}-a_{2}\otimes c_{5}\rangle)\\&+ (C\otimes \langle a_{3}\otimes
b_{4},a_{3}\otimes b_{2}+a_{2}\otimes b_{2}, a_{3}\otimes
b_{5}-a_{2}\otimes b_{5}\rangle).
\end{align*}
Hence
\begin{align*}
\hat{T}_{\varphi_{1}}(\widehat{Sub}_{2,3,6}(\mathbb{C}^3\otimes
\mathbb{C}^5\otimes \mathbb{C}^{11}))\cap
\hat{T}_{\varphi_{2}}(\widehat{Sub}_{2,3,6}(\mathbb{C}^3\otimes
\mathbb{C}^5\otimes \mathbb{C}^{11}))=\langle a_{2}\otimes
b_{2}\otimes c_{2}\rangle.
\end{align*}
Therefore, $\varphi_{1}$ and $\varphi_{2}$ as chosen above are sufficiently general, and the defect is $1$.
\end{proof}
\begin{ex}\label{222 444}
The secant variety $\sigma_{2}(Sub_{2,2,2}(\mathbb{C}^{4}\otimes
\mathbb{C}^{4}\otimes \mathbb{C}^{4}))$ is tangentially weakly
defective, although it is non-defective.
\end{ex}
\begin{proof}
The fact that $\sigma_{2}(Sub_{2,2,2}(\mathbb{C}^{4}\otimes
\mathbb{C}^{4}\otimes \mathbb{C}^{4}))$ is not defective follows from Remark \ref{def 2 spec}. So now we pass to the
proof that $\sigma_{2}(Sub_{2,2,2}(\mathbb{C}^{4}\otimes
\mathbb{C}^{4}\otimes \mathbb{C}^{4}))$ is tangentially weakly
defective.

Let $A,B$ and $C$ be complex vector spaces of dimensions $4, 4, 4$ respectively. Choose the splitting
$A=A_{1}\oplus A_{2},\ B=B_{1}\oplus B_{2},\ C=C_{1}\oplus C_{2}$,
where each one of $A_{1},A_{2},B_{1},B_{2},C_{1},C_{2}$ has dimension $2$.

Since $\sigma_{2}(\mathbb{P}^{1}\times \mathbb{P}^{1}\times
\mathbb{P}^{1})=\mathbb{P}(\mathbb{C}^{2}\otimes \mathbb{C}^{2}\otimes \mathbb{C}^{2})$(see Theorem $5.5.1.1$ in \cite{La}), there exists a general pair $\{\varphi_{1},\varphi_{2}\}\in \widehat{Sub}_{2,2,2}(\mathbb{C}^{4}\otimes
\mathbb{C}^{4}\otimes \mathbb{C}^{4})$, such that
\begin{align*}
&\varphi_{1}=a_{1}\otimes b_{1}\otimes c_{1}+a_{2}\otimes
b_{2}\otimes c_{2},
\end{align*}
and
\begin{align*}
&\varphi_{2}=a_{3}\otimes b_{3}\otimes c_{3}+a_{4}\otimes
b_{4}\otimes c_{4},
\end{align*}
where
$\{a_{1}, a_{2}\}$, $\{b_{1}, b_{2}\}$, $\{c_{1}, c_{2}\}$ are bases for $A_{1}$, $B_{1}$, $C_{1}$ respectively and similarly $\{a_{3}, a_{4}\}$, $\{b_{3}, b_{4}\}$, $\{c_{3}, c_{4}\}$ are bases for $A_{2}$, $B_{2}$, $C_{2}$. And note that $\varphi_{1}+\varphi_{2}$ is a general point in $\sigma_{2}(Sub_{2,2,2}(\mathbb{C}^{4}\otimes
\mathbb{C}^{4}\otimes \mathbb{C}^{4}))$.
From Theorem \ref{ter lem}, we have
\begin{align*}
&\hat{T}_{\varphi_{1}+\varphi_{2}}(\sigma_{2}(\widehat{Sub}_{2,2,2}(\mathbb{C}^{4}\otimes
\mathbb{C}^{4}\otimes
\mathbb{C}^{4})))\\=&\bigoplus_{1\leq i,j,k\leq 2}\langle a_{i}\otimes
b_{j}\otimes c_{k}\rangle \bigoplus_{3\leq i\leq 4,1\leq j\leq 2}\langle a_{i}\otimes
b_{j}\otimes c_{j}\rangle \\&\bigoplus_{1\leq i\leq 2,3\leq j\leq 4}\langle a_{i}\otimes
b_{i}\otimes c_{j}\rangle \bigoplus_{3\leq j\leq 4,1\leq k\leq 2}\langle a_{k}\otimes
b_{j}\otimes c_{k}\rangle \\&\bigoplus_{3\leq i,j,k\leq 4}\langle a_{i}\otimes
b_{j}\otimes c_{k}\rangle \bigoplus_{1\leq i\leq 2,3\leq j\leq 4}\langle a_{i}\otimes
b_{j}\otimes c_{j}\rangle \\&\bigoplus_{3\leq i\leq 4,1\leq j\leq 2}\langle a_{i}\otimes
b_{i}\otimes c_{j}\rangle \bigoplus_{1\leq j\leq 2,3\leq k\leq 4}\langle a_{k}\otimes
b_{j}\otimes c_{k}\rangle.
\end{align*}

Define $\psi=a_{1}\otimes b_{1}\otimes c_{1}+a_{3}\otimes
b_{3}\otimes c_{3}$, which is a third general point in
$\widehat{Sub}_{2,2,2}\mathbb{C}^{4}\otimes \mathbb{C}^{4}\otimes
\mathbb{C}^{4}$, and note that
\begin{align}\label{morm 222}
&\hat{T}_{\psi}(\widehat{Sub}_{2,2,2}(\mathbb{C}^{4}\otimes
\mathbb{C}^{4}\otimes \mathbb{C}^{4}))\nonumber\\=&\bigoplus_{i,j,k\in
\{1,3\}}\langle a_{i}\otimes b_{j}\otimes
c_{k}\rangle\bigoplus_{i=2,4,j=1,3}\langle a_{i}\otimes
b_{j}\otimes c_{j}\rangle\\&\bigoplus_{j=2,4,i=1,3}\langle
b_{j}\otimes a_{i}\otimes c_{i}\rangle \bigoplus_{k=2,4,\
i=1,3}\langle c_{k}\otimes a_{i}\otimes b_{i}\rangle.\nonumber
\end{align}
It is straightforward to compute that
\begin{align*}
&\hat{T}_{\varphi_{1}}(\widehat{Sub}_{2,2,2}(\mathbb{C}^{4}\otimes
\mathbb{C}^{4}\otimes \mathbb{C}^{4}))+
\hat{T}_{\varphi_{2}}(\widehat{Sub}_{2,2,2}(\mathbb{C}^{4}\otimes
\mathbb{C}^{4}\otimes \mathbb{C}^{4}))\\& \supset
\hat{T}_{\psi}(\widehat{Sub}_{2,2,2}(\mathbb{C}^{4}\otimes
\mathbb{C}^{4}\otimes \mathbb{C}^{4})),\nonumber
\end{align*}
This implies
$\sigma_{2}(Sub_{2,2,2}(\mathbb{C}^{4}\otimes
\mathbb{C}^{4}\otimes \mathbb{C}^{4}))$ is tangentially weakly-defective.
\end{proof}
\begin{rem}
Although tangentially weakly defective does not imply
non-uniqueness, the decomposition is not unique here. The reason
is trivial:
\begin{align*}
&(a_{1}\otimes b_{1}\otimes c_{1}+a_{2}\otimes b_{2}\otimes
c_{2})+(a_{3}\otimes b_{3}\otimes c_{3}+a_{4}\otimes b_{4}\otimes
c_{4})\\=&(a_{1}\otimes b_{1}\otimes c_{1}+a_{3}\otimes
b_{3}\otimes c_{3})+(a_{2}\otimes b_{2}\otimes c_{2}+a_{4}\otimes
b_{4}\otimes c_{4}).
\end{align*}
\end{rem}

\begin{ex}\label{weak by not tan-weak}
The secant variety $\sigma_{2}(Sub_{1,2,2}(\mathbb{C}^{2}\otimes \mathbb{C}^{4}\otimes
\mathbb{C}^{4}))$ is weakly defective, but not tangentially
weakly defective.
\end{ex}
\begin{proof}
Part $1$: Let $A,\ B$ and $C$ denote complex vector spaces of dimensions $2, 4, 4$ respectively.

We need to prove that for any general hyperplane $H$ tangent to $\widehat{Sub}_{1,2,2}(\mathbb{C}^{2}\otimes \mathbb{C}^{4}\otimes
\mathbb{C}^{4})$ at a general pair of points $\{\varphi_{1},\varphi_{2}\}$
 is also tangent to
$\widehat{Sub}_{1,2,2}(\mathbb{C}^{2}\otimes \mathbb{C}^{4}\otimes
\mathbb{C}^{4})$ at some general point $\psi$, satisfying $[\psi] \neq [\varphi_{1}],[\varphi_{2}]$.

Choose now general points $\varphi_{1},\varphi_{2}\in \widehat{Sub}_{1,2,2}(\mathbb{C}^{2}\otimes
\mathbb{C}^{4}\otimes \mathbb{C}^{4})$, and  $\varphi_{1}+\varphi_{2}\in \hat{\sigma}_{2}(Sub_{1,2,2}(\mathbb{C}^{2}\otimes \mathbb{C}^{4}\otimes
\mathbb{C}^{4}))$ is also general. Without loss of generality, we assume
\begin{align*}
&\varphi_{1}=a_{1}\otimes (b_{1}\otimes c_{1}+b_{2}\otimes
c_{2}),\\
&\varphi_{2}=a_{2}\otimes (b_{3}\otimes c_{3}+b_{4}\otimes c_{4}),
\end{align*}
where $A=\langle a_{1},a_{2}\rangle$, $B=\langle b_{1},\cdots, b_{4}\rangle$ and $C=\langle c_{1},\cdots, c_{4}\rangle$.
Note that $A=A_{1}\oplus
A_{2},\ B=B_{1}\oplus B_{2},\ C=C_{1}\oplus C_{2}$, where $\{a_{1}\}$, $\{b_{1},\ b_{2}\}$, $\{c_{1}, c_{2}\}$ are bases for $A_{1}$, $B_{1}$, $C_{1}$, respectively, and similarly
$\{a_{2}\}$, $\{b_{3},\ b_{4}\}$, $\{c_{3}, c_{4}\}$ are bases for $A_{2}$, $B_{2}$, $C_{2}$.

For $\varphi_{p}\in A_{p}\otimes B_{p}\otimes C_{p},p=1,2$, we have
\begin{align}\label{tan 122}
&\hat{T}_{\varphi_{p}}(\widehat{Sub}_{1,2,2}(\mathbb{C}^{2}\otimes
\mathbb{C}^{4}\otimes \mathbb{C}^{4})\\&=(A_{p}\otimes B\otimes C_{p})+(A_{p}\otimes B_{p}\otimes C)+(A\otimes
\varphi_{p}(A^{*}_{p})),\nonumber
\end{align}
and
\begin{align*}
&\hat{T}^{\bot}_{\varphi_{p}}(\widehat{Sub}_{1,2,2}(\mathbb{C}^{2}\otimes
\mathbb{C}^{4}\otimes
\mathbb{C}^{4})=(A^{\bot}_{p}\otimes
B^{\bot}_{p}\otimes C^{*})\oplus (A^{*}\otimes B_{p}^{\bot}\otimes
C^{\bot}_{p})\\&\oplus (A_{p}^{\bot}\otimes B^{*}\otimes
C^{\bot}_{p})\oplus (A_{p}^{\bot}\otimes
(\varphi_{p}(A^{*}_{p})^{\bot}\cap (B^{*}_{p}\otimes
C^{*}_{p}))).\nonumber
\end{align*}
Then using Theorem \ref{ter lem}, we have
\begin{align*}
&\hat{T}^{\bot}_{
\varphi_{1}+\varphi_{2}}(\sigma_{2}(\widehat{Sub}_{1,2,2}(\mathbb{C}^{2}\otimes
\mathbb{C}^{4}\otimes
\mathbb{C}^{4}))\\=&\hat{T}^{\bot}_{\varphi_{1}}(\widehat{Sub}_{1,2,2}(\mathbb{C}^{2}\otimes
\mathbb{C}^{4}\otimes
\mathbb{C}^{4})\cap \hat{T}^{\bot}_{\varphi_{2}}(\widehat{Sub}_{1,2,2}(\mathbb{C}^{2}\otimes
\mathbb{C}^{4}\otimes
\mathbb{C}^{4})\\=&(A_{2}^{*}\otimes
(\varphi_{1}(A^{*}_{1})^{\bot}\cap (B^{*}_{1}\otimes
C^{*}_{1})))\oplus (A_{1}^{*}\otimes
(\varphi_{2}(A^{*}_{2})^{\bot}\cap (B^{*}_{2}\otimes
C^{*}_{2}))),
\end{align*}
which implies
\begin{align}\label{norm 122}
&\hat{T}^{\bot}_{\varphi_{1}+\varphi_{2}}(\sigma_{2}(\widehat{Sub}_{1,2,2}(\mathbb{C}^{2}\otimes
\mathbb{C}^{4}\otimes
\mathbb{C}^{4})))\nonumber\\=&\langle a^{*}_{2}\otimes
b^{*}_{1}\otimes c^{*}_{2},\ a^{*}_{2}\otimes b^{*}_{2}\otimes
c^{*}_{1},\ a^{*}_{2}\otimes b^{*}_{1}\otimes
c^{*}_{1}-a^{*}_{2}\otimes b^{*}_{2}\otimes
c^{*}_{2},\\&a^{*}_{1}\otimes b^{*}_{4}\otimes
c^{*}_{3},\ a^{*}_{1}\otimes b^{*}_{3}\otimes
c^{*}_{4},\ a^{*}_{1}\otimes b^{*}_{4}\otimes
c^{*}_{4}-a^{*}_{1}\otimes b^{*}_{3}\otimes c^{*}_{3}\rangle.\nonumber
\end{align}
Due to \eqref{norm 122}, every hyperplane tangent to $\widehat{Sub}_{1,2,2}(\mathbb{C}^{2}\otimes
\mathbb{C}^{4}\otimes \mathbb{C}^{4})$ at $\varphi_{1}$ and $\varphi_{2}$ is of the form
\begin{align*}
H&=a^{*}_{2}\otimes (\lambda_{1}b^{*}_{1}\otimes
c^{*}_{2}+\lambda_{2}
 b^{*}_{2}\otimes
c^{*}_{1}+\lambda_{3} (b^{*}_{1}\otimes c^{*}_{1}-b^{*}_{2}\otimes
c^{*}_{2}))\\&+a^{*}_{1}\otimes (\mu_{1}b^{*}_{4}\otimes
c^{*}_{3}+\mu_{2} b^{*}_{3}\otimes c^{*}_{4}+\mu_{3}
(b^{*}_{4}\otimes c^{*}_{4}-b^{*}_{3}\otimes c^{*}_{3})),
\end{align*}
where all of $\lambda_{i},\mu_{j},1\leq i,j\leq3$ are not zero.

It is straightforward to calculate that $H$ is tangent to
$\widehat{Sub}_{1,2,2}(\mathbb{C}^{2}\otimes \mathbb{C}^{4}\otimes
\mathbb{C}^{4})$ at $\psi$, where
\begin{align*}
\psi=a_{1}\otimes (-\lambda_{2}(b_{1}\otimes c_{2})+\lambda_{1}
 (b_{2}\otimes c_{1})+\lambda_{3} (b_{1}\otimes c_{1}+b_{2}\otimes
c_{2})),
\end{align*}
clearly $[\psi]\neq [\varphi_{1}],[\varphi_{2}]$.

This concludes the proof that
$\sigma_{2}(Sub_{1,2,2}(\mathbb{C}^{2}\otimes \mathbb{C}^{4}\otimes
\mathbb{C}^{4})$) is weakly defective.

We pass now to the proof that $\sigma_{2}(Sub_{1,2,2}(\mathbb{C}^{2}\otimes \mathbb{C}^{4}\otimes
\mathbb{C}^{4})$) is not tangentially weakly defective.

Part $2$:
Let $\psi=a'\otimes (b'\otimes c'+b''\otimes c'')\in A'\otimes
B'\otimes C'$ be a general point in
$Sub_{1,2,2}(\mathbb{C}^{2}\otimes \mathbb{C}^{4}\otimes
\mathbb{C}^{4})$, where $A'=\langle a' \rangle$, $B'=\langle b', b''\rangle$ and $C'=\langle c', c''\rangle$; obviously we have
\begin{align}\label{tan psi}
&\hat{T}_{\psi}(\widehat{Sub}_{1,2,2}(\mathbb{C}^{2}\otimes
\mathbb{C}^{4}\otimes \mathbb{C}^{4}))\\&=(A'\otimes B\otimes C')+
(A'\otimes B'\otimes C)+(A\otimes \psi(A'^{*})).\nonumber
\end{align}
First, let $\{\varphi_{1},\varphi_{2}\}\in \widehat{Sub}_{1,2,2}(\mathbb{C}^{2}\otimes
\mathbb{C}^{4}\otimes \mathbb{C}^{4})$. Without loss of generality, we can consider the general pair $\{\varphi_{1},\varphi_{2}\}$ as in Part $1$.

Also according to Remark \ref{nor terr},
\begin{align}\label{122 def tan}
&\hat{T}_{\varphi_{1}}(\widehat{Sub}_{1,2,2}(\mathbb{C}^{2}\otimes
\mathbb{C}^{4}\otimes \mathbb{C}^{4}))+
\hat{T}_{\varphi_{2}}(\widehat{Sub}_{1,2,2}(\mathbb{C}^{2}\otimes
\mathbb{C}^{4}\otimes \mathbb{C}^{4}))\\& \supset
\hat{T}_{\psi}(\widehat{Sub}_{1,2,2}(\mathbb{C}^{2}\otimes
\mathbb{C}^{4}\otimes \mathbb{C}^{4})),\nonumber
\end{align}
is equivalent to that
\begin{align}\label{122 def tan rever}
&\hat{T}^{\bot}_{\varphi_{1}}(\widehat{Sub}_{1,2,2}(\mathbb{C}^{2}\otimes
\mathbb{C}^{4}\otimes \mathbb{C}^{4}))\cap
\hat{T}^{\bot}_{\varphi_{2}}(\widehat{Sub}_{1,2,2}(\mathbb{C}^{2}\otimes
\mathbb{C}^{4}\otimes \mathbb{C}^{4}))\\& \subset
\hat{T}^{\bot}_{\psi}(\widehat{Sub}_{1,2,2}(\mathbb{C}^{2}\otimes
\mathbb{C}^{4}\otimes \mathbb{C}^{4})).\nonumber
\end{align}
Hence we need to prove that these inclusions imply that $[\psi]$ is either $[\varphi_{1}]$ or $[\varphi_{2}]$.

Express $a',c'$ as
\begin{align*}
a'=x_{1}a_{1}+x_{2}a_{2},\ c'=z_{1}c_{1}+\ldots+z_{4}c_{4}.
\end{align*}
and we first treat the case when $x_{1},\ x_{2}$ are both nonzero.
A hyperplane
\begin{align*}
H_{1}=a^{*}_{2}\otimes b^{*}_{1}\otimes c^{*}_{2}
\end{align*}
and respectively
\begin{align*}
H_{2}=a^{*}_{2}\otimes b^{*}_{2}\otimes c^{*}_{1}
\end{align*}
is tangent to $\widehat{Sub}_{1,2,2}(\mathbb{C}^{2}\otimes
\mathbb{C}^{4}\otimes \mathbb{C}^{4})$ at $\psi$, only if, using the symbol $\vdash$ introduced in \eqref{rank 1},
we have
\begin{align*}
a^{*}_{2}\otimes b^{*}_{1}\otimes c^{*}_{2}\vdash
(x_{1}a_{1}+x_{2}a_{2})\otimes b_{1}\otimes
(z_{1}c_{1}+\ldots+z_{4}c_{4})=x_{2}z_{2}=0,
\end{align*}
and respectively,
\begin{align*}
a^{*}_{2}\otimes b^{*}_{2}\otimes c^{*}_{1}\vdash
(x_{1}a_{1}+x_{2}a_{2})\otimes b_{2}\otimes
(z_{1}c_{1}+\ldots+z_{4}c_{4})=x_{2}z_{1}=0,
\end{align*}
where
\begin{align*}
(x_{1}a_{1}+x_{2}a_{2})\otimes b_{j}\otimes
(z_{1}c_{1}+\ldots+z_{4}c_{4})\in A'\otimes B\otimes C',\ j=1,2.
\end{align*}
This implies $z_{2},\ z_{1}=0$ and by symmetry, $z_{k}=0$, for $1\leq
k\leq 4$. Then $c'=0$, so $b'=0$ by symmetry; for the same reason $c''=b''=0$, hence $\psi=0$.

Next without loss of generality, we treat the case when $a'=a_{1}$; taking $H=a^{*}_{1}\otimes b^{*}_{4}\otimes
c^{*}_{3},\ a^{*}_{1}\otimes b^{*}_{3}\otimes c^{*}_{4}$, we
obtain $z_{3},\ z_{4}=0$, so $c'\in C_{1}$. By symmetry, $c''\in
C_{1}$; and for the same reason, $b',b''\in B_{1}$. Thus we have $\psi\in A_{1}\otimes
B_{1}\otimes C_{1}$. Similarly when $a'=a_{2}$, we have $\psi\in
A_{2}\otimes B_{2}\otimes C_{2}$.

From the above analysis, $H$ is tangent to
$\widehat{Sub}_{1,2,2}(\mathbb{C}^{2}\otimes \mathbb{C}^{4}\otimes
\mathbb{C}^{4})$ at $\psi$ only if $\psi\in \bigcup_{p=1,2} A_{p}\otimes
B_{p}\otimes C_{p}$. But without loss of generality, if $\psi\in A_{1}\otimes
B_{1}\otimes C_{1}$, in the previous case, \eqref{122 def tan} becomes
\begin{align*}
&\bigoplus_{p=1,2}\{(A_{p}\otimes B\otimes C_{p})+(A_{p}\otimes B_{p}\otimes C)+(A\otimes
\varphi_{p}(A^{*}_{p}))\}\\&\supset (A_{1}\otimes B\otimes C_{1})+(A_{1}\otimes B_{1}\otimes C)+(A\otimes
\psi(A^{*}_{1})),
\end{align*}
and hence
$[\psi(A^{*}_{1})]=[\varphi_{1}(A_{1}^{*})]$,
which implies $[\psi]=[\varphi_{1}]$.
By semicontinuity, this concludes the proof that $\sigma_{2}(Sub_{1,2,2}(\mathbb{C}^{2}\otimes
\mathbb{C}^{4}\otimes \mathbb{C}^{4}))$ is not tangentially
weakly-defective.
\end{proof}
\section{Proof of main result}
This section contains a proof of Theorem \ref{main results}. However, before passing to the proof, it worth noting that the geometric meaning of the
basic hypothesis of Theorem \ref{main results}, namely
\begin{align*}
\sum_{1\leq r\leq R}(J\cdot L_{r}+L_{r}\cdot (K-L_{r})+(I-1))<IJK,
\end{align*}
is equivalent to the join $\textbf{J}(\mbox{\it Sub}_{1,L_{1},L_{1}}(\mathbb{C}^{I}\otimes
\mathbb{C}^{J}\otimes
\mathbb{C}^{K}),\ldots,\mbox{\it Sub}_{1,L_{R},L_{R}}(\mathbb{C}^{I}\otimes
\mathbb{C}^{J}\otimes \mathbb{C}^{K}))$ not filling its ambient space.

It is relatively easy to
determine cases where tensors have at most a finite number of decompositions.
\subsection{Proof that Condition A implies partial uniqueness}
Let $A,\ B$ and $C$ be complex vector spaces of dimensions $I, J, K$ respectively.
Choose general $\varphi_{p}\in \widehat{\mbox{\it Sub}}_{1,L_{p},L_{p}}(\mathbb{C}^I\otimes
\mathbb{C}^J\otimes \mathbb{C}^{K}),1\leq p\leq R$. Without loss of generality, we assume
\begin{align*}
\varphi_{p}=(a_{1}+\lambda^{p}a_{2})\otimes (b_{p,1}\otimes
c_{p,1}+b_{p,2}\otimes c_{p,2}+\cdots+b_{p,L_{p}}\otimes
c_{p,L_{p}})\in A_{p}\otimes B_{p}\otimes C_{p},
\end{align*}
where $\{a_{1}+\lambda^{p}a_{2}\}$ are bases for $A_{p}$, $\{b_{p,1},\ldots,b_{p,L_{p}}\}\subset \{b_{1},\ldots, b_{J}\}$,
$\{c_{p,1},\ldots,c_{p,L_{p}}\}\subset \{c_{1},\ldots, c_{K}\}$ are bases for $B_{p}$, $C_{p}$, where $1 \leq p\leq R$.

Using Lemma \ref{tangentspace}, it is straightforward to compute
\begin{align*}
&\hat{T}_{\varphi_{p}}(\widehat{\mbox{\it Sub}}_{1,L_{p},L_{p}}(\mathbb{C}^I\otimes
\mathbb{C}^J\otimes \mathbb{C}^{K}))\\=&(A\otimes
\langle\sum_{1\leq j\leq L_{p}}b_{p,j}\otimes
c_{p,j}\rangle)+ (\langle a_{1}+\lambda^{p}a_{2}\rangle\otimes
B\otimes C_{p})+ (\langle a_{1}+\lambda^{p}a_{2}\rangle\otimes
B_{p}\otimes C)
\end{align*}
and by Theorem \ref{ter lem}, we have
\begin{align}\label{def 1ll}
&\hat{T}_{\sum^{R}_{p=1}\varphi_{p}}(\hat{\textbf{J}}(\mbox{\it Sub}_{1,L_{1},L_{1}}(\mathbb{C}^{I}\otimes
\mathbb{C}^{J}\otimes
\mathbb{C}^{K}),\ldots,\mbox{\it Sub}_{1,L_{R},L_{R}}(\mathbb{C}^{I}\otimes
\mathbb{C}^{J}\otimes \mathbb{C}^{K})))\nonumber\\=&\sum^{R}_{p=1}\{(A\otimes
\langle\sum_{1\leq j\leq L_{p}}b_{p,j}\otimes
c_{p,j}\rangle)\\&+ (\langle a_{1}+\lambda^{p}a_{2}\rangle\otimes
B\otimes C_{p})+ (\langle a_{1}+\lambda^{p}a_{2}\rangle\otimes
B_{p}\otimes C)\}.\nonumber
\end{align}
We need to prove that \eqref{def 1ll} is a direct sum at least for this set of points $\{\varphi_{1},\ldots,\varphi_{R}\}$.

Since ${{J}\choose{L_{R}}}, {{K}\choose{L_{R}}}\geq R$,
we can choose bases such that for any pair $p\neq q$, there exists $b_{s}\in \{b_{p,1},\ldots,b_{p,L_{p}}\}$ not belong to $\{b_{q,1},\ldots,b_{q,L_{q}}\}$; and similarly there exists $c_{s}\in\{c_{p,1},\ldots,c_{p,L_{p}}\}$ not belong to $\{c_{q,1},\ldots,c_{q,L_{q}}\}$ for any pair $p\neq q$. Then
we have
\begin{align*}
(\sum_{1\leq j\leq L_{p}}b_{p,j}\otimes c_{p,j}) \cap (B\otimes C_{q}+B_{q}\otimes C)=\{0\}.
\end{align*}
Therefore,
\begin{align*}
&(A\otimes
\langle\sum_{1\leq j\leq L_{p}}b_{p,j}\otimes
c_{p,j}\rangle) \bigcap \\(\sum_{\forall q\neq p} \{&(A\otimes
\langle\sum_{1\leq j\leq L_{p}}b_{q,j}\otimes
c_{q,j}\rangle)+ (\langle a_{1}+\lambda^{q}a_{2}\rangle\otimes
B\otimes C_{q})\\&+ (\langle a_{1}+\lambda^{q}a_{2}\rangle\otimes
B_{q}\otimes C)\})\\&=\sum_{\forall q\neq p} \langle a_{1}+\lambda^{q}a_{2}\rangle\otimes
(\langle\sum_{1\leq j\leq L_{p}}b_{p,j}\otimes
c_{p,j}\rangle \cap (B\otimes C_{q}+
B_{q}\otimes C))\\&=\{0\},
\end{align*}
as well as
\begin{align*}
\{&(\langle a_{1}+\lambda^{p}a_{2}\rangle\otimes B\otimes
C_{p})+ (\langle a_{1}+\lambda^{p}a_{2}\rangle\otimes
B_{p}\otimes C)\}\\&\bigcap \sum_{p\neq q} \{(\langle
a_{1}+\lambda^{q}a_{2}\rangle\otimes B\otimes C_{q})+ (\langle
a_{1}+\lambda^{q}a_{2}\rangle\otimes B_{q}\otimes C)\}=\{0\}.
\end{align*}
Thus \eqref{def 1ll} is a direct sum, and by semicontinuity, this concludes the proof of non-defectivity.
Now Condition $A$ follows from Case \ref{partial unique}.
\subsection{Proof that Condition B implies non-uniqueness}
Using Proposition \ref{def of many join}, and from Corollary \ref{partial unique}, we deduce Case $B$.

\subsection{Proof that Condition C implies generic uniqueness}
It is sufficient to prove the case $I=2, J=K=\sum^{R}_{r=1}L_{r}$. Otherwise we replace the equality in \eqref{norm1LL R} with the inclusion $\supset$.

Let $A,\ B$ and $C$ be complex vector spaces of dimensions $I, J, K$ respectively. Split $B=\bigoplus_{1\leq q\leq R}B_{q}$ and $C=\bigoplus_{1\leq r\leq R}C_{r}$, where for $1\leq q,r\leq R$,  $B_{q}$ and $C_{r}$ are of dimensions $L_{q},L_{r}$, respectively.

Choose a general set $\{\varphi_{p}\in \widehat{\mbox{\it Sub}}_{1,L_{p},L_{p}}(\mathbb{C}^I\otimes
\mathbb{C}^J\otimes \mathbb{C}^{K}):1\leq p\leq R\}$. Without loss of generality, we can assume
\begin{align*}
\varphi_{p}=(a_{1}+\lambda^{p}a_{2})\otimes (b_{p,1}\otimes
c_{p,1}+b_{p,2}\otimes c_{p,2}+\cdots+b_{p,L_{p}}\otimes
c_{p,L_{p}})\in A_{p}\otimes B_{p}\otimes C_{p},
\end{align*}
for any $1\leq p\leq R$, where $\{a_{1}+\lambda^{p}a_{2}\}$, $\{b_{p,1},\ldots,b_{p,L_{p}}\}$ and
$\{c_{p,1},\ldots,c_{p,L_{p}}\}$ are bases for $A_{p}$, $B_{p}$, $C_{p}$.
Note that for a general set $\{\varphi_{p}\in A_{p}\otimes B_{p}\otimes C_{p}:1\leq p\leq R\}$, we have
\begin{align}
&\hat{T}_{\varphi_{p}}(\widehat{\mbox{\it Sub}}_{1,L_{p},L_{p}}(\mathbb{C}^I\otimes
\mathbb{C}^J\otimes \mathbb{C}^{K}))\\=&(A\otimes
\varphi_{p}(A^{*}_{p}))+ (A_{p}\otimes
B\otimes C_{p})+ (A_{p}\otimes B_{p}\otimes C),\nonumber
\end{align}
and
\begin{align*}
&\hat{T}^{\bot}_{\varphi_{p}}(\widehat{\mbox{\it Sub}}_{1,L_{p},L_{p}}(\mathbb{C}^{I}\otimes
\mathbb{C}^{J}\otimes
\mathbb{C}^{K}))=(A^{\bot}_{p}\otimes
B^{\bot}_{p}\otimes C^{*})\oplus (A_{p}^{*}\otimes
B_{p}^{\bot}\otimes C^{\bot}_{p})\\&\oplus (A_{p}^{\bot}\otimes
B_{p}^{*}\otimes C^{\bot}_{p})\oplus (A_{p}^{\bot}\otimes
(\varphi_{p}(A^{*}_{p})^{\bot}\cap (B^{*}_{p}\otimes
C^{*}_{p}))).
\end{align*}

Then due to Theorem \ref{ter lem}, we deduce
\begin{align}\label{norm1LL R}
&\hat{T}^{\bot}_{\sum
\varphi_{p}}(\textbf{J}(\widehat{\mbox{\it Sub}}_{1,L_{1},L_{1}}(\mathbb{C}^{I}\otimes
\mathbb{C}^{J}\otimes
\mathbb{C}^{K}),\ldots,\widehat{\mbox{\it Sub}}_{1,L_{R},L_{R}}(\mathbb{C}^{I}\otimes
\mathbb{C}^{J}\otimes
\mathbb{C}^{K})))\nonumber\\=&\bigcap_{1\leq p\leq R}\widehat{T}^{\bot}_{\varphi_{p}}(\widehat{\mbox{\it Sub}}_{1,L_{p},L_{p}}(\mathbb{C}^{I}\otimes
\mathbb{C}^{J}\otimes
\mathbb{C}^{K}))\nonumber\\=&\bigoplus_{1\leq p\leq R}A_{p}^{\bot}\otimes
(\varphi_{p}(A^{*}_{p})^{\bot}\cap (B^{*}_{p}\otimes
C^{*}_{p}))\\=&\bigoplus_{1\leq p\leq R,\ j\neq
k}\langle \lambda^{p}a^{*}_{1}-a^{*}_{2}\rangle\otimes \langle
b^{*}_{p,j}\otimes c^{*}_{p,k}, b^{*}_{p,j}\otimes
c^{*}_{p,j}-b^{*}_{p,k}\otimes c^{*}_{p,k}\rangle.\nonumber
\end{align}
For any $1\leq s\leq R$, let
\begin{align}\label{psi s}
\psi_{s}=a'\otimes (b_{1}'\otimes c_{1}'+\cdots+b_{L_{s}}'\otimes
c_{L_{s}}')\in A'\otimes B'\otimes C',
\end{align}
be a general point of $\widehat{\mbox{\it Sub}}_{1,L_{s},L_{s}}\mathbb{C}^{I}\otimes \mathbb{C}^{J}\otimes
\mathbb{C}^{K}$, where $A'=\langle a' \rangle$, $B'=\langle b_{1}',\ldots,b_{L_{s}}'\rangle$ and $C'=\langle c_{1}',\ldots,c_{L_{s}}'\rangle$; note that
\begin{align}\label{tan psi D}
&\hat{T}_{\psi_{s}}(\widehat{\mbox{\it Sub}}_{1,L_{s},L_{s}}\mathbb{C}^{I}\otimes \mathbb{C}^{J}\otimes
\mathbb{C}^{K})\\&=(A'\otimes B\otimes C')+
(A'\otimes B'\otimes C)+(A\otimes \psi(A'^{*})).\nonumber
\end{align}
Also according to Remark \ref{nor terr}, the relation
\begin{align}\label{tan 1ll D}
&\hat{T}_{\varphi_{1}}(\widehat{\mbox{\it Sub}}_{1,L_{1},L_{1}}(\mathbb{C}^{I}\otimes
\mathbb{C}^{J}\otimes \mathbb{C}^{K}))+\cdots +
\hat{T}_{\varphi_{R}}(\widehat{\mbox{\it Sub}}_{1,L_{R},L_{R}}(\mathbb{C}^{I}\otimes
\mathbb{C}^{J}\otimes \mathbb{C}^{K}))\\&\supset \hat{T}_{\psi_{s}}(\widehat{\mbox{\it Sub}}_{1,L_{s},L_{s}}(\mathbb{C}^{I}\otimes
\mathbb{C}^{J}\otimes \mathbb{C}^{K})),\nonumber
\end{align}
is equivalent to
\begin{align*}
\bigcap_{1\leq p\leq R}\widehat{T}^{\bot}_{\varphi_{p}}(\widehat{\mbox{\it Sub}}_{1,L_{p},L_{p}}(\mathbb{C}^{I}\otimes
\mathbb{C}^{J}\otimes
\mathbb{C}^{K}))\subset \hat{T}^{\bot}_{\psi_{s}}(\widehat{\mbox{\it Sub}}_{1,L_{s},L_{s}}(\mathbb{C}^{I}\otimes
\mathbb{C}^{J}\otimes \mathbb{C}^{K})),
\end{align*}
Hence we need to prove that these inclusions imply that $\psi_{s}\in \{\varphi_{1},\ldots,\varphi_{R}\}$.

Denote $c'$ any one of $c_{1}',\ldots,c_{L_{s}}'$ in \eqref{psi s} and write $a',c'$ as
\begin{align*}
&a'=x_{1}a_{1}+x_{2}a_{2},\\ &c'=\sum_{1\leq h\leq L_{1}}z_{1,h}c_{1,h}+\cdots+\sum_{1\leq h\leq L_{R}}z_{R,h}c_{R,h}.
\end{align*}
We treat first the case when $(x_{1},\ x_{2})\neq
(1,\ \lambda^{p})$ for any $1\leq p\leq R$.

A general hyperplane in \eqref{norm1LL R} is a linear combination of $(\lambda^{p}a^{*}_{1}-a^{*}_{2})\otimes
b^{*}_{p,j}\otimes c^{*}_{p,k}$ and $(\lambda^{p}a^{*}_{1}-a^{*}_{2})\otimes (b^{*}_{p,j}\otimes
c^{*}_{p,j}-b^{*}_{p,k}\otimes c^{*}_{p,k})$. In particular,
\begin{align*}
(\lambda^{p}a^{*}_{1}-a^{*}_{2})\otimes b^{*}_{p,j}\otimes
c^{*}_{p,k}\in \hat{T}^{\bot}_{\psi_{s}}(\widehat{\mbox{\it Sub}}_{1,L_{s},L_{s}}(\mathbb{C}^{I}\otimes
\mathbb{C}^{J}\otimes \mathbb{C}^{K}))
\end{align*}
(i.e. is tangent to $\widehat{\mbox{\it Sub}}_{1,L_{s},L_{s}}(\mathbb{C}^{I}\otimes
\mathbb{C}^{J}\otimes \mathbb{C}^{K})$ at $\psi_{s}$) only if
\begin{align*}
&(\lambda^{p}a^{*}_{1}-a^{*}_{2})\otimes b^{*}_{p,j}\otimes
c^{*}_{p,k}\vdash \\&(x_{1}a_{1}+x_{2}a_{2})\otimes
b_{p,j}\otimes (\sum_{1\leq h\leq L_{1}}z_{1,h}c_{1,h}+\cdots+\sum_{1\leq h\leq L_{R}}z_{R,h}c_{R,h})\\&=(\lambda^{p}x_{1}-x_{2})z_{t,p_{k}}=0,
\end{align*}
where
\begin{align*}
(x_{1}a_{1}+x_{2}a_{2})\otimes b_{p,j}\otimes (\sum_{1\leq
k\leq L} z_{t,r_{k}}c_{t,r_{k}})\in A'\otimes B\otimes C'.
\end{align*}
Therefore $z_{t,h}=0,1\leq t\leq R$ for any $h$, so $c'=0$, that is $c'_{1}=\cdots=c'_{L_{s}}=0$;
for the same reason $b'_{1}=\cdots=b'_{L_{s}}=0$,
hence $\psi_{s}=0$ in this case.

Next, we consider the case when $a'=a_{1}+\lambda a_{2}$; taking a hyperplane in \eqref{norm1LL R}
\begin{align*}
H_{p}=(\lambda^{p}a^{*}_{1}-a^{*}_{2})\otimes b^{*}_{p,j}\otimes
c^{*}_{p,k},\ p\neq 1,
\end{align*}
we obtain $z_{t,h}=0,2\leq t\leq R$ for any $h$.
So we deduce
\begin{align*}
c'=\sum_{1\leq h\leq L_{1}}z_{1,h}c_{1,h}\in C_{1},
\end{align*}
for every $c'\in \{c'_{1},\ldots,c'_{L_{s}}\}$. By symmetry, we
have $b'_{1},\ldots, b'_{L_{s}}\in B_{1}$. Thus $\psi_{s}\in
A_{1}\otimes B_{1}\otimes C_{1}$.
In this case, \eqref{tan 1ll D} becomes
\begin{align*}
&\bigoplus_{1\leq p\leq R}\{(A_{p}\otimes B\otimes C_{p})+
(A_{p}\otimes B_{p}\otimes C)+(A\otimes \varphi_{p}(A_{p}^{*}))\}\\&\supset(A_{1}\otimes B\otimes C_{1})+
(A_{1}\otimes B_{1}\otimes C)+(A\otimes \psi_{s}(A_{1}^{*})),\nonumber
\end{align*}
this is valid if and only if
\begin{align*}
[\psi_{s}(A^{*}_{1})]=[\varphi_{1}(A_{1}^{*})]
\end{align*}
and then $[\psi_{s}]=[\varphi_{1}]$.

Finally, it remains to consider the case when
$a'=a_{1}+\lambda^{q} a_{2},\ 1<q\leq R$. Then clearly, we have $\psi_{s}\in A_{q}\otimes
B_{q}\otimes C_{q}$, so $[\psi_{s}]=[\varphi_{q}]$. Now by semicontinuity, we can conclude the proof that
$\textbf{J}(\mbox{\it Sub}_{1,L_{1},L_{1}}(\mathbb{C}^{I}\otimes
\mathbb{C}^{J}\otimes
\mathbb{C}^{K}),\ldots,\mbox{\it Sub}_{1,L_{R},L_{R}}(\mathbb{C}^{I}\otimes
\mathbb{C}^{J}\otimes \mathbb{C}^{K}))$ is not tangentially
weakly-defective for $I=2,\ J=K=\sum^{R}_{s=1}L_{s}$. From Corollary \ref{twd unique}, we obtain Case C.

\begin{rem}
An alternative least square algorithm for the computation of
decompositions under the above Case $C$ is given in \cite{1LL}.
\end{rem}

\subsection{Proof that Condition D implies generic uniqueness}
The proof will be nearly the same as that for condition C. (The difference is that we cannot apply Corollary \ref{twd unique}  directly in the proof of
Condition D.) As there, it is sufficient to prove the case
 $I=2$. Otherwise we change the equality in \eqref{norm 2} into the inclusion $\supset$.

Let $A,\ B$ and $C$ denote vector spaces of dimensions $I, J, K$ respectively. Split $A=A_{1}\oplus A_{2},\
B=B_{1}\oplus B_{0}\oplus B_{2}$ and $C_{1}\oplus C_{0}\oplus C_{2}$, where $A_{1},A_{2}$ are of dimension one, $B_{1}$, $B_{0}$, $B_{2}$, and $C_{1}$, $C_{0}$, $C_{2}$ are of dimension $L_{1}-l_{b}$, $l_{b}$, $L_{2}-l_{b}$, $L_{1}-l_{c}$, $l_{c}$, $L_{2}-l_{c}$, respectively, for some $0\leq l_{b},l_{c}< \min \{L_{1},L_{2}\}$.

For general $\varphi_{p}\in A_{p}\otimes (B_{p}\oplus B_{0})\otimes (C_{p}\oplus C_{0}),p=1,2$, we have
\begin{align}
&\hat{T}_{\varphi_{p}}(\widehat{\mbox{\it Sub}}_{1,L_{p},L_{p}}(\mathbb{C}^I\otimes
\mathbb{C}^J\otimes \mathbb{C}^{K}))\\=&(A\otimes
\varphi_{p}(A^{*}_{p}))+ (A_{p}\otimes
B\otimes (C_{p}\oplus C_{0}))+ (A_{p}\otimes (B_{p}\oplus B_{0})\otimes C);\nonumber
\end{align}
and also
\begin{align*}
&\hat{T}^{\bot}_{\varphi_{1}}(\widehat{\mbox{\it Sub}}_{1,L_{1},L_{1}}(\mathbb{C}^{I}\otimes
\mathbb{C}^{J}\otimes
\mathbb{C}^{K}))\nonumber\\=&(A^{*}_{2}\otimes B^{*}_{2}\otimes
C^{*})\oplus (A_{1}^{*}\otimes B_{2}^{*}\otimes C^{*}_{2})\oplus
(A_{2}^{*}\otimes (B_{1}\oplus B_{0})^{*}\otimes
C^{*}_{2})\\ &\oplus (A_{2}^{*}\otimes (\varphi_{1}(A^{*}_{1})^{\bot}\cap ((B^{*}_{1}\oplus B^{*}_{0})\otimes
(C^{*}_{1}\oplus C^{*}_{0})))),
\end{align*}
respectively
\begin{align*}
&\hat{T}^{\bot}_{\varphi_{2}}(\widehat{\mbox{\it Sub}}_{1,L_{2},L_{2}}(\mathbb{C}^{I}\otimes
\mathbb{C}^{J}\otimes
\mathbb{C}^{K}))\nonumber\\=&(A^{*}_{1}\otimes B^{*}_{1}\otimes
C^{*})\oplus (A_{2}^{*}\otimes B_{1}^{*}\otimes C^{*}_{1})\oplus
(A_{1}^{*}\otimes (B_{2}\oplus B_{0})^{*}\otimes C^{*}_{1})
\\ &\oplus (A_{1}^{*}\otimes (\varphi_{2}(A^{*}_{2})^{\bot}\cap ((B^{*}_{2}\oplus B^{*}_{0})\otimes
(C^{*}_{2}\oplus C^{*}_{0})))).
\end{align*}
Choose general points $\varphi_{p}\in \widehat{\mbox{\it Sub}}_{1,L_{p},L_{p}}(\mathbb{C}^I\otimes
\mathbb{C}^J\otimes \mathbb{C}^{K})$, for $1\leq p\leq 2$. Without loss of generality, we assume $l_{b}\geq l_{c}$, and then
\begin{align*}
\varphi_{1}&=a_{1}\otimes (b_{1,1}\otimes c_{1,1}+\cdots+ b_{1,L_{1}-l_{b}}\otimes c_{1,L_{1}-l_{b}}+b_{0,1}\otimes c_{1,L_{1}-l_{b}+1}+\cdots+b_{0,l_{b}}\otimes c_{0,l_{c}})\\&\in A_{1}\otimes (B_{1}\oplus B_{0})\otimes (C_{1}\oplus C_{0})\cong \mathbb{C}\otimes
\mathbb{C}^{L_{1}}\otimes \mathbb{C}^{L_{1}},
\end{align*}
and respectively
\begin{align*}
\varphi_{2}&=a_{2}\otimes (b_{2,1}\otimes c_{2,1}+\cdots+ b_{2,L_{1}-l_{b}}\otimes c_{2,L_{1}-l_{b}}+b_{0,1}\otimes c_{2,L_{1}-l_{b}+1}+\cdots+b_{0,l_{b}}\otimes c_{0,l_{c}})\\&\in A_{2}\otimes (B_{2}\oplus B_{0})\otimes (C_{2}\oplus C_{0})\cong \mathbb{C}\otimes
\mathbb{C}^{L_{2}}\otimes \mathbb{C}^{L_{2}},
\end{align*}
where $A_{i}=\langle a_{i}\rangle (i=1,2)$; $\{b_{0,1},\ldots,b_{0,l_{b}}\}$, $\{b_{1,1},\ldots,b_{1,L_{1}-l_{b}}\}$, $\{b_{2,1},\ldots,b_{2,L_{2}-l_{b}}\}$, $\{c_{0,1},\ldots,c_{0,l_{c}}\}$, $\{c_{1,1},\ldots,c_{1,L_{1}-l_{c}}\}$ and $\{c_{2,1},\ldots,c_{2,L_{2}-l_{c}}\}$
are bases for $B_{0}$, $B_{1}$, $B_{2}$, $C_{0}$, $C_{1}$ and $C_{2}$ respectively, where $J+l_{b}=L_{1}+L_{2}$ and $K+l_{c}=L_{1}+L_{2}$.

Using these bases, we obtain
\begin{align}\label{norm 2}
&\hat{T}^{\bot}_{\varphi_{1}}(\widehat{\mbox{\it Sub}}_{1,L_{1},L_{1}}(\mathbb{C}^{I}\otimes
\mathbb{C}^{J}\otimes \mathbb{C}^{K}))\cap
\hat{T}^{\bot}_{\varphi_{2}}(\widehat{\mbox{\it Sub}}_{1,L_{2},L_{2}}(\mathbb{C}^{I}\otimes
\mathbb{C}^{J}\otimes \mathbb{C}^{K}))\nonumber\\=&(A_{2}^{*}\otimes
(\varphi_{1}(A^{*}_{1})^{\bot}\cap (B_{1}^{*}\otimes
C^{*}_{1})))\oplus (A_{1}^{*}\otimes
(\varphi_{2}(A^{*}_{2})^{\bot}\cap (B_{2}^{*}\otimes C^{*}_{2})))
\nonumber\\=&\bigoplus_{j\neq k}\langle a^{*}_{1}\otimes
b^{*}_{2,j}\otimes c^{*}_{2,k},a^{*}_{2}\otimes
b^{*}_{1,j}\otimes c^{*}_{1,k},\\& a^{*}_{1}\otimes
(b^{*}_{2,j}\otimes c^{*}_{2,j}-
b^{*}_{2,k}\otimes c^{*}_{2,k}),a^{*}_{2}\otimes
(b^{*}_{1,j}\otimes c^{*}_{1,j}-
b^{*}_{1,k}\otimes c^{*}_{1,k})\rangle.\nonumber
\end{align}
Let
\begin{align}\label{psi D}
\psi_{s}=a'\otimes (b_{1}'\otimes c_{1}'+\cdots+b_{L_{s}}'\otimes
c_{L_{s}}')\in A'\otimes B'\otimes C',
\end{align}
be a general point in one of $\widehat{\mbox{\it Sub}}_{1,L_{s},L_{s}}(\mathbb{C}^{I}\otimes \mathbb{C}^{J}\otimes
\mathbb{C}^{K}),\ s=1,2$, where $A'=\langle a' \rangle$, $B'=\langle b_{1}',\ldots,b_{L_{s}}'\rangle$ and $C'=\langle c_{1}',\ldots,c_{L_{s}}'\rangle$.
Using Lemma \ref{tangentspace}, we have
\begin{align}\label{tan psi C}
&\hat{T}_{\psi_{s}}(\widehat{\mbox{\it Sub}}_{1,L_{s},L_{s}}(\mathbb{C}^{I}\otimes \mathbb{C}^{J}\otimes
\mathbb{C}^{K}))\\&=(A'\otimes B\otimes C')+
(A'\otimes B'\otimes C)+(A\otimes \psi(A'^{*})).\nonumber
\end{align}
If
\begin{align}\label{phi1+phi2}
[\varphi_{1}+\varphi_{2}]=[\psi_{1}+\psi_{2}],
\end{align}
we have
\begin{align}\label{tan 1ll C}
&\hat{T}_{\varphi_{1}}(\widehat{\mbox{\it Sub}}_{1,L_{1},L_{1}}(\mathbb{C}^{I}\otimes
\mathbb{C}^{J}\otimes \mathbb{C}^{K}))+
\hat{T}_{\varphi_{2}}(\widehat{\mbox{\it Sub}}_{1,L_{2},L_{2}}(\mathbb{C}^{I}\otimes
\mathbb{C}^{J}\otimes \mathbb{C}^{K}))\\&\supset \hat{T}_{\psi_{s}}(\widehat{Sub}_{1,L_{s},L_{s}}(\mathbb{C}^{I}\otimes
\mathbb{C}^{J}\otimes \mathbb{C}^{K})),\nonumber
\end{align}
and according to Remark \ref{nor terr}, it is equivalent to that
\begin{align}\label{norm 1ll C}
&\hat{T}^{\bot}_{\varphi_{1}}(\widehat{\mbox{\it Sub}}_{1,L_{1},L_{1}}(\mathbb{C}^{I}\otimes
\mathbb{C}^{J}\otimes \mathbb{C}^{K}))\cap
\hat{T}^{\bot}_{\varphi_{2}}(\widehat{\mbox{\it Sub}}_{1,L_{2},L_{2}}(\mathbb{C}^{I}\otimes
\mathbb{C}^{J}\otimes \mathbb{C}^{K}))\\&\subset \hat{T}^{\bot}_{\psi_{s}}(\widehat{\mbox{\it Sub}}_{1,L_{s},L_{s}}(\mathbb{C}^{I}\otimes
\mathbb{C}^{J}\otimes \mathbb{C}^{K})).\nonumber
\end{align}
Hence we need to prove that these inclusions imply that $\psi_{s}\in \{\varphi_{1},\varphi_{2}\}$.

Express a  $c'\in \{c_{1}',\ldots,c_{L_{r}}'\}$ in \eqref{psi D} as
\begin{align*}
c'=\sum_{1\leq h\leq l_{c}}
z_{0,h}c_{0,h}+\sum_{1\leq h\leq L_{1}-l_{c}}z_{1,h}c_{1,h}+\sum_{1\leq h\leq L_{2}-l_{c}}z_{2,h}c_{2,h},
\end{align*}
and write $a'=x_{1}a_{1}+x_{2}a_{2}$. We treat first the case that both $x_{1},x_{2}$ are
nonzero. From \eqref{tan psi C}, a general hyperplane in \eqref{norm 2} is a linear combination of $a^{*}_{1}\otimes
b^{*}_{2,j}\otimes c^{*}_{2,k},a^{*}_{2}\otimes
b^{*}_{1,j}\otimes c^{*}_{1,k}, a^{*}_{1}\otimes
(b^{*}_{2,j}\otimes c^{*}_{2,j}-
b^{*}_{2,k}\otimes c^{*}_{2,k}),a^{*}_{2}\otimes
(b^{*}_{1,j}\otimes c^{*}_{1,j}-
b^{*}_{1,k}\otimes c^{*}_{1,k})$.

Note that for
\begin{align*}
(x_{1}a_{1}+x_{2}a_{2})\otimes
b_{1,j}\otimes (\sum_{1\leq h\leq l_{c}}
z_{0,h}c_{0,h}+\sum_{1\leq h\leq L_{1}-l_{c}}z_{1,h}c_{1,h}+\sum_{1\leq h\leq L_{2}-l_{c}}z_{2,h}c_{2,h})
\end{align*}
in $A'\otimes B\otimes C'$, we have
\begin{align*}
H_{1}=&a^{*}_{2}\otimes b^{*}_{1,j}\otimes c^{*}_{1,k}\in \hat{T}^{\bot}_{\psi_{s}}(\widehat{Sub}_{1,L_{s},L_{s}}(\mathbb{C}^{I}\otimes
\mathbb{C}^{J}\otimes \mathbb{C}^{K}))
\end{align*}
(i.e. $H_{1}$ is tangent to $\widehat{\mbox{\it Sub}}_{1,L_{s},L_{s}}(\mathbb{C}^{I}\otimes
\mathbb{C}^{J}\otimes \mathbb{C}^{K})$ at $\psi_{s}$)
only if
\begin{align*}
&a^{*}_{2}\otimes b^{*}_{1,j}\otimes c^{*}_{1,k}\vdash \\&(x_{1}a_{1}+x_{2}a_{2})\otimes
b_{1,j}\otimes (\sum_{1\leq h\leq l_{c}}
z_{0,h}c_{0,h}+\sum_{1\leq h\leq L_{1}-l_{c}}z_{1,h}c_{1,h}+\sum_{1\leq h\leq L_{2}-l_{c}}z_{2,h}c_{2,h})\\&=x_{2}z_{1,k}=0,
\end{align*}
and respectively
\begin{align*}
H_{2}=a^{*}_{1}\otimes b^{*}_{2,j}\otimes
c^{*}_{2,k}\in \hat{T}^{\bot}_{\psi_{s}}(\widehat{\mbox{\it Sub}}_{1,L_{s},L_{s}}(\mathbb{C}^{I}\otimes
\mathbb{C}^{J}\otimes \mathbb{C}^{K})
\end{align*}
only if
\begin{align*}
&a^{*}_{1}\otimes b^{*}_{2,j}\otimes c^{*}_{2,k}\vdash \\&(x_{1}a_{1}+x_{2}a_{2})\otimes
b_{2,j}\otimes (\sum_{1\leq h\leq l_{c}}
z_{0,h}c_{0,h}+\sum_{1\leq h\leq L_{1}-l_{c}}z_{1,h}c_{1,h}+\sum_{1\leq h\leq L_{2}-l_{c}}z_{2,h}c_{2,h})\\&=x_{1}z_{2,k}=0.
\end{align*}

Therefore
$z_{1,h}=0$, for $1\leq h\leq L_{1}-l_{c}$, and $z_{2,h}=0$, for $1\leq h\leq L_{2}-l_{c}$.
So $c'=\sum_{1\leq h\leq l_{c}}z_{0,h}c_{0,h}\in C_{0}$.
For the same reason $b'\in B_{0}$ for every $b'\in \{b_{1}',\ldots,b_{L_{s}}'\}$. Thus $\psi_{s}\in A_{p}\otimes B_{0}\otimes C_{0},\ p=1,2$. But since $\dim B_{0},\ \dim C_{0}$ are both less than $\textrm{min}\ \{L_{1},\ L_{2}\}$, we get a contradiction to the fact that $\psi_{s}$ has multilinear rank $(1,L_{1},L_{1})$ or $(1,L_{2},L_{2})$. Thus $x_{1},\ x_{2}$ cannot be both nonzero.

Next, without loss of generality, we consider the case when $a'=a_{1}$. Taking
the hyperplane
\begin{align*}
H_{2}=a^{*}_{1}\otimes b^{*}_{2,j}\otimes
c^{*}_{2,k},
\end{align*}
in \eqref{norm 2}, we obtain by a similar computation that  $z_{2,h}=0$, for each $1\leq h\leq L_{2}-l_{c}$, so
\begin{align*}
c'=\sum_{1\leq h\leq l_{c}}
z_{0,h}c_{0,h}+\sum_{1\leq h\leq L_{1}-l_{c}}z_{1,h}c_{1,h}
\in C_{1}\oplus C_{0};
\end{align*}
for the same reason, $b_{h}'\in B_{1}\oplus B_{0}$, and
\begin{align*}
\psi_{s}\in A_{1}\otimes
(B_{1}\oplus B_{0})\otimes (C_{1}\oplus C_{0}).
\end{align*}
Now \eqref{phi1+phi2}
is valid if and only if
\begin{align*}
[\psi_{s}(A^{*}_{1})]=[\varphi_{1}(A_{1}^{*})],
\end{align*}
and then
$[\psi_{s}]=[\varphi_{1}]$. Thus we obtain Case D.

\subsection{Proof that Condition E implies generic uniqueness}
The proof will be nearly the same as that for Condition D. As there, it is sufficient to prove the case $I=R$, $K=\sum^{R}_{r=1}L_{r}$. Let $A,\ B$ and $C$ denote vector spaces of dimensions $I,\ J,\ K$ respectively. Choose splitting $A=\bigoplus_{1\leq p\leq R}A_{p}$ and $C=\bigoplus_{1\leq r\leq R}C_{r}$. Further fix a basis $\{b_{1},\ldots,b_{J}\}$ for $B$.

Choose general points $\varphi_{p}\in \widehat{\mbox{\it Sub}}_{1,L_{p},L_{p}}(\mathbb{C}^I\otimes
\mathbb{C}^J\otimes \mathbb{C}^{K})$ for $1\leq p\leq R$. Without loss of generality, for $1\leq p\leq R$, we can assume
\begin{align*}
\varphi_{p}=a_{p}\otimes (b_{p,1}\otimes
c_{p,1}+b_{p,2}\otimes c_{p,2}+\cdots+b_{p,L_{p}}\otimes
c_{p,L_{p}})\in A_{p}\otimes B_{p}\otimes C_{p},
\end{align*}
where $\{a_{p}\}$, $\{b_{p,1},\ldots,b_{p,L_{p}}\}\subset \{b_{1},\ldots,b_{J}\}$,
$\{c_{p,1},\ldots,c_{p,L_{p}}\}$ are bases for $A_{p}$, $B_{p}$, $C_{p}$, respectively.

Then for $\varphi_{p}\in A_{p}\otimes B_{p}\otimes C_{p},1\leq p\leq R$, it is clear that
\begin{align*}
&\hat{T}^{\bot}_{\varphi_{p}}(\widehat{\mbox{\it Sub}}_{1,L_{p},L_{p}}(\mathbb{C}^{I}\otimes
\mathbb{C}^{J}\otimes
\mathbb{C}^{K}))=(A^{\bot}_{p}\otimes
B^{\bot}_{p}\otimes C^{*})\oplus (A_{p}^{*}\otimes
B_{p}^{\bot}\otimes C^{\bot}_{p})\\&\oplus (A_{p}^{\bot}\otimes
B_{p}^{*}\otimes C^{\bot}_{p})\oplus (A_{p}^{\bot}\otimes
(\varphi_{p}(A^{*}_{p})^{\bot}\cap (B^{*}_{p}\otimes
C^{*}_{p}))).
\end{align*}
Due to Theorem \ref{ter lem} and since $J>2L_{R}\geq L_{i}+L_{r}$, there exists $b^{*}_{q,j}\notin  B^{*}_{i}\oplus B^{*}_{r}$ for any $i,r$, where $i\neq r$, such that
\begin{align}
&\hat{T}^{\bot}_{\sum
\varphi_{p}}(\textbf{J}(\widehat{\mbox{\it Sub}}_{1,L_{1},L_{1}}(\mathbb{C}^{I}\otimes
\mathbb{C}^{J}\otimes
\mathbb{C}^{K}),\ldots,\widehat{\mbox{\it Sub}}_{1,L_{R},L_{R}}(\mathbb{C}^{I}\otimes
\mathbb{C}^{J}\otimes
\mathbb{C}^{K})))\nonumber\\ &\supset\bigoplus_{i\neq r}
\langle a^{*}_{i}\otimes b^{*}_{q,j}\otimes c^{*}_{r,k}\rangle,1\leq j\leq L_{q},1\leq k\leq L_{r}.
\end{align}
Let
\begin{align}\label{psi E}
\psi_{s}=a'\otimes (b_{1}'\otimes c_{1}'+\cdots+b_{L_{s}}'\otimes
c_{L_{s}}')\in A'\otimes B'\otimes C',
\end{align}
be a general point in one of $\widehat{\mbox{\it Sub}}_{1,L_{s},L_{s}}\mathbb{C}^{I}\otimes \mathbb{C}^{J}\otimes
\mathbb{C}^{K},\ 1\leq s\leq R$, where $A'=\langle a' \rangle$, $B'=\langle b_{1}',\ldots,b_{L_{s}}'\rangle$ and $C'=\langle c_{1}',\ldots,c_{L_{s}}'\rangle$, and note that
\begin{align}
&\hat{T}_{\psi_{s}}(\widehat{\mbox{\it Sub}}_{1,L_{s},L_{s}}\mathbb{C}^{I}\otimes \mathbb{C}^{J}\otimes
\mathbb{C}^{K})\\&=(A'\otimes B\otimes C')+
(A'\otimes B'\otimes C)+(A\otimes \psi(A'^{*})).\nonumber
\end{align}
If
\begin{align}\label{phi1+phi R}
[\varphi_{1}+\cdots+\varphi_{R}]=[\psi_{1}+\cdots+\psi_{R}],
\end{align}
we have
\begin{align}\label{tan 1ll E}
&\hat{T}_{\varphi_{1}}(\widehat{\mbox{\it Sub}}_{1,L_{1},L_{1}}(\mathbb{C}^{I}\otimes
\mathbb{C}^{J}\otimes \mathbb{C}^{K}))+\cdots +
\hat{T}_{\varphi_{R}}(\widehat{\mbox{\it Sub}}_{1,L_{R},L_{R}}(\mathbb{C}^{I}\otimes
\mathbb{C}^{J}\otimes \mathbb{C}^{K}))\\&\supset \hat{T}_{\psi_{s}}(\widehat{\mbox{\it Sub}}_{1,L_{s},L_{s}}(\mathbb{C}^{I}\otimes
\mathbb{C}^{J}\otimes \mathbb{C}^{K})),\nonumber
\end{align}
and according to Remark \ref{nor terr}, it is equivalent to that
\begin{align*}
\bigcap_{1\leq p\leq R}\widehat{T}^{\bot}_{\varphi_{p}}(\widehat{\mbox{\it Sub}}_{1,L_{p},L_{p}}(\mathbb{C}^{I}\otimes
\mathbb{C}^{J}\otimes
\mathbb{C}^{K}))\subset \hat{T}^{\bot}_{\psi_{s}}(\widehat{\mbox{\it Sub}}_{1,L_{s},L_{s}}(\mathbb{C}^{I}\otimes
\mathbb{C}^{J}\otimes \mathbb{C}^{K})).
\end{align*}
Hence we need to prove that these inclusions imply that $\psi_{s}\in \{\varphi_{1},\cdots,\varphi_{R}\}$.

Express $a'$ and $c'$  in one of the $\{c_{i}': 1\leq i\leq L_{s}\}$ occurring in \eqref{psi E} as
\begin{align*}
&a'=x_{1}a_{1}+ \cdots +x_{R}a_{R},\\ &c'=\sum_{1\leq h\leq L_{1}}z_{1,h}c_{1,h}+\cdots+\sum_{1\leq h\leq L_{R}}z_{R,h}c_{R,h}.
\end{align*}
Since $J> 2L_{R},\ {{J}\choose{L_{R}}}\geq R$, without loss of generality, we first treat the case when both
 $x_{1}, x_{2}$ are nonzero.

A general hyperplane in (4.15) is a linear combination of $a^{*}_{i}\otimes b^{*}_{q,j}\otimes c^{*}_{r,k}$, and for $b^{*}_{q,j}\notin B^{*}_{i}\oplus B^{*}_{r}$, we have
\begin{align*}
H=a^{*}_{i}\otimes b^{*}_{q,j}\otimes c^{*}_{r,k}\in \hat{T}^{\bot}_{\psi_{s}}(\widehat{\mbox{\it Sub}}_{1,L_{s},L_{s}}(\mathbb{C}^{I}\otimes
\mathbb{C}^{J}\otimes \mathbb{C}^{K})),
\end{align*}
only if
\begin{align*}
&a^{*}_{i}\otimes b^{*}_{p,j}\otimes c^{*}_{r,k}\vdash \\&
(x_{1}a_{1}+ \cdots +x_{R}a_{R})\otimes b_{p,j}\otimes (\sum_{1\leq h\leq L_{1}}z_{1,h}c_{1,h}+\cdots+\sum_{1\leq h\leq L_{R}}z_{R,h}c_{R,h})\\&=x_{i}z_{t,h}=0,\ t\neq i,\ \textrm{for}\ i=1,2,
\end{align*}
where
\begin{align*}
&(x_{1}a_{1}+ \cdots +x_{R}a_{R})\otimes b_{p,j}\otimes (\sum_{1\leq h\leq L_{1}}z_{1,h}c_{1,h}+\cdots+\sum_{1\leq h\leq L_{R}}z_{R,h}c_{R,h})\\& \in A'\otimes B\otimes C'.
\end{align*}
Therefore $z_{t,h}=0$, when $t\neq 1,2$. Then $c'_{1}=\cdots=c'_{L_{s}}=0,\ b'_{1}=\cdots=b'_{L_{s}}=0$,
hence we have $\psi_{s}=0$ in this case.

Next without loss of generality, we treat the case
 $a'=a_{1}$. Taking the hyperplane $H=a^{*}_{1}\otimes
b^{*}_{q,j}\otimes c^{*}_{r,k}$, we obtain by similar computation that $z_{t,h}=0$, when
$t\neq 1$. So
$c'=\sum_{1\leq h\leq L_{1}}z_{1,h}c_{1,h}\in C_{1}$.
For the same reason, we have
$b'\in B_{1}$, for every $b'\in \{b_{1}',\ldots,b_{L_{s}}'\}$ and consequently $\psi_{s}\in A_{1}\otimes
B_{1}\otimes C_{1}$.

Now
\eqref{phi1+phi R} is valid only if
\begin{align*}
[\psi_{s}(A^{*}_{1})]=[\varphi_{1}(A_{1}^{*})]
\end{align*}
and then $[\psi_{s}]=[\varphi_{1}]$.
Generally when $a'=a_{q}$, we have
$[\psi_{s}(A^{*}_{q})]=[\varphi_{q}(A_{q}^{*})]$, which implies
$[\psi_{s}]=[\varphi_{q}]$. Thus we obtain Case E.

\section{A criterion of uniqueness}
We will give a new proof of a criterion of uniqueness for block term tensor decomposition, due to De Lathauwer \cite{Lat 4}. Note that the uniqueness condition in our Corollary \ref{twd unique} concerns a larger class of decompositions.
\begin{thm}(\cite{Lat 4}, Theorem 2.3)\label{de lautherwal thm}
Assume $I\geq R$, \eqref{BTD1LL} is essentially unique if and only if for any $X_{j_{1}},\ \cdots,\ X_{j_{s}}$, we have
\begin{align}\label{geometric lat}
\langle X_{j_{1}},\ \cdots,\ X_{j_{s}}\rangle\cap \sigma_{L_{j_{t}}}(\mathbb{P}^{J-1}\times \mathbb{P}^{K-1})\subset \{X_{j_{1}},\ \cdots,\ X_{j_{s}}\},\ 1\leq t\leq s.
\end{align}
\end{thm}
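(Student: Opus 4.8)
The plan is to reduce everything to the span $W:=Y(A^{*})\subset\mathbb{C}^{J}\otimes\mathbb{C}^{K}$ and then translate essential uniqueness into a statement about the rank-constrained matrices lying in $W$. First I would exploit the hypothesis $I\geq R$: for a genuine decomposition \eqref{BTD1LL} the mode-$1$ rank of $Y$ equals $R$, so the $a_{r}$ are linearly independent, the $X_{r}$ are linearly independent, and $\dim W=R$ with $Y(A^{*})=\langle X_{1},\ldots,X_{R}\rangle$. The key point is that any competing decomposition $Y=\sum_{r}a_{r}'\otimes X_{r}'$ into $R$ terms again forces the $a_{r}'$ to be independent (the mode-$1$ rank is the invariant $R$), hence $\langle X_{1}',\ldots,X_{R}'\rangle=Y(A^{*})=W$; matching coefficients in $A\otimes W$ then shows the $a_{r}'$ are determined by the $X_{r}'$. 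Thus decompositions of $Y$ correspond exactly to bases of $W$ whose members satisfy $[X_{r}']\in\sigma_{L_{r}}(\mathbb{P}^{J-1}\times\mathbb{P}^{K-1})$, and essential uniqueness becomes the assertion that $\{[X_{1}],\ldots,[X_{R}]\}$ is, up to permutation within rank classes, the only such basis.

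Second, I would record a reformulation of the geometric hypothesis. Since $\{X_{1},\ldots,X_{R}\}$ is a basis of $W$, an original term $X_{i}$ lies in $\langle X_{j_{1}},\ldots,X_{j_{s}}\rangle$ only when $i\in\{j_{1},\ldots,j_{s}\}$. Using this I claim that \eqref{geometric lat} (for all subcollections and all $t$) is equivalent to the single identity $W\cap\sigma_{L_{R}}(\mathbb{P}^{J-1}\times\mathbb{P}^{K-1})=\{[X_{1}],\ldots,[X_{R}]\}$: any $[M]$ in a subcollection's span with $\mathrm{rank}\,M\leq L_{j_{t}}\leq L_{R}$ lies in $W\cap\sigma_{L_{R}}$, so if the identity holds then $[M]=[X_{i}]$ for some $i$, and the basis property forces $i$ to sit inside the chosen subcollection. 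Conversely the identity is just the instance of \eqref{geometric lat} for the full collection with $L_{j_{t}}=L_{R}$. This reduces Theorem \ref{de lautherwal thm} to proving that essential uniqueness holds if and only if the only matrices of rank $\leq L_{R}$ in $W$ are the $[X_{i}]$.

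The forward-easy implication (condition $\Rightarrow$ uniqueness) is then short: any basis element $X_{r}'$ of a competing decomposition satisfies $\mathrm{rank}\,X_{r}'\leq L_{r}\leq L_{R}$ and lies in $W$, hence $[X_{r}']\in W\cap\sigma_{L_{R}}=\{[X_{i}]\}$; linear independence of the $X_{r}'$ turns $r\mapsto i$ into a permutation, and $[X_{r}']=[X_{i}]$ forces $\mathrm{rank}\,X_{r}'=L_{i}$, so the permutation respects the rank classes. This is exactly the essential-uniqueness conclusion, and it is the concrete linear-algebraic counterpart of Corollary \ref{twd unique}.

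The substantive implication is the converse, which I would prove contrapositively: assuming $W$ contains an extra matrix $Z$ with $\mathrm{rank}\,Z\leq L_{R}$ and $[Z]\notin\{[X_{i}]\}$, I must produce a genuinely different valid decomposition. Writing $Z=\sum_{i}\mu_{i}X_{i}$, the natural move is to replace a term $X_{r_{0}}$ with $\mu_{r_{0}}\neq 0$ (so the collection stays a basis) by $Z$, readjusting the $a$'s so as to reproduce $Y$; since $[Z]$ is new, the frame genuinely changes. The hard part, and the main obstacle, is that the new term must have rank exactly $L_{r_{0}}$, so that the result is a bona fide decomposition into terms of multilinear ranks $(1,L_{1},L_{1}),\ldots,(1,L_{R},L_{R})$; an extra matrix of rank strictly below $L_{r_{0}}$ would alter the prescribed rank multiset and would not contradict essential uniqueness. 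To handle this I would choose $Z$ of minimal rank $\rho$ among all extra points, so that every element of $W$ of rank $<\rho$ is already some $[X_{i}]$ and consequently $\mathrm{rank}\,Z=\rho$; then I would seek an index $r_{0}$ in the support of $Z$ whose rank can be matched, and, when the support supplies no index of rank exactly $\rho$, pass to the pencil $\langle Z,X_{r_{0}}\rangle$ and use lower semicontinuity of rank together with the irreducibility of $\sigma_{L_{r_{0}}}(\mathbb{P}^{J-1}\times\mathbb{P}^{K-1})$ to manufacture an extra element of rank exactly $L_{r_{0}}$ entering the basis with nonzero coefficient. Verifying that such a rank-matched, basis-preserving replacement always exists is the crux; once it is in hand, the swap immediately exhibits a second, essentially distinct block term decomposition, completing the equivalence.
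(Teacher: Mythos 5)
Your first three steps are correct and, in substance, they reproduce the paper's own proof, which covers \emph{only} that implication: the paper assumes a second decomposition $Y=\sum_{r}a'_{r}\otimes\tilde{X}_{r}$, uses the independence of $a_{1},\ldots,a_{R}$ (this is the role of $I\geq R$) to write each $\tilde{X}_{r}=\sum_{j}\tilde{\alpha}^{r}_{j}X_{j}$, and observes that if the two decompositions are genuinely different then some $\tilde{X}_{r}$ has at least two nonzero coefficients, so $[\tilde{X}_{r}]$ lies in $\langle X_{j_{1}},\ldots,X_{j_{s}}\rangle\cap\sigma_{L_{r}}(\mathbb{P}^{J-1}\times\mathbb{P}^{K-1})$ without being one of the $X_{j_{i}}$, contradicting \eqref{geometric lat}. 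Your reformulation through $W=Y(A^{*})$ and the single identity $W\cap\sigma_{L_{R}}(\mathbb{P}^{J-1}\times\mathbb{P}^{K-1})=\{[X_{1}],\ldots,[X_{R}]\}$ is an equivalent packaging of the same mechanism (both versions tacitly use the ordering $L_{1}\leq\cdots\leq L_{R}$ and the linear independence of the $X_{i}$, which is also what makes the matrix $[\tilde{\alpha}^{r}_{j}]$ invertible in the paper). Be aware, however, that the paper's proof stops there: despite the ``if and only if'' in the statement, the necessity direction is never proved in the paper.

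The genuine gap in your proposal is in that necessity direction, exactly where you flag it, and the pencil argument you sketch does not close it. The obstruction is structural: \eqref{geometric lat} is a condition on the \emph{closed} varieties $\sigma_{L}$ (matrices of rank at most $L$), while \eqref{BTD1LL} prescribes the ranks exactly, so a witness $Z\in W$ with $\operatorname{rank}Z$ strictly below every $L_{i}$ in its support cannot simply be swapped in without changing the rank multiset. On the pencil $\langle Z,X_{r_{0}}\rangle\cong\mathbb{P}^{1}$, lower semicontinuity of rank says only that the locus of rank $\leq L_{r_{0}}$ is Zariski closed, hence either the whole pencil or a finite set of points; in the finite case nothing forces it to contain any point of rank exactly $L_{r_{0}}$ other than $[X_{r_{0}}]$ itself, and irreducibility of $\sigma_{L_{r_{0}}}(\mathbb{P}^{J-1}\times\mathbb{P}^{K-1})$ is irrelevant because the pencil is a fixed, non-generic line (a pencil spanned by a rank-$\rho$ matrix and a rank-$L_{r_{0}}$ matrix whose row spaces and column spaces meet trivially has every other member of rank $\rho+L_{r_{0}}>L_{r_{0}}$). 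Excluding such configurations inside the span of actual block terms is where the real work lies: already for $R=2$ it requires an analysis of the Kronecker canonical form of the pencil $\langle X_{1},X_{2}\rangle$, which bounds the total rank drop along the pencil by its generic rank, and no argument of this kind appears in your proposal (nor in the paper). So, as written, you have reproved the paper's implication by essentially the paper's method, while the converse implication remains unestablished in both texts.
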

\begin{proof}
Assume the contrary that $Y=\sum_{r=1}^{R}a'_{r}\otimes \tilde{X}_{r}$ is different from \eqref{BTD1LL}. Since $a_{1},\ldots,a_{R}$ are independent, we have $a'_{r}=\sum^{R}_{j=1}\alpha^{r}_{j}a_{j}$, where $\alpha^{r}_{j}$ are not all zero.
Hence
\begin{align*}
Y=\sum_{r=1}^{R}a_{r}\otimes X_{r}=\sum_{r=1}^{R}a_{r}\otimes (\sum^{R}_{j=1} \alpha^{r}_{j} \tilde{X}_{j}).
\end{align*}
Thus $X_{r}=\sum^{R}_{j=1} \alpha^{r}_{j} \tilde{X}_{j}$. Taking the inverse of $[\alpha^{r}_{j}]$, we have $\tilde{X}_{r}=\sum^{R}_{j=1} \tilde{\alpha}^{r}_{j} X_{j}$.
Consequently, there exists $r,j_{1},j_{2}\in \{1,\ldots,R\}$ such that $j_{1}\neq j_{1}$ and $\tilde{\alpha}^{r}_{j_{1}}\cdot\tilde{\alpha}^{r}_{j_{2}}\neq 0$. Then we obtain
\begin{align*}
\tilde{X}_{r}\in\langle X_{j_{1}}, \ldots, X_{j_{s}}\rangle\cap \sigma_{L_{r}}(\mathbb{P}^{J-1}\times \mathbb{P}^{K-1}).
\end{align*}
But $\tilde{X}_{r}$ is not belong to $\{X_{j_{1}}, \ldots, X_{j_{s}}\}$, which contradicts to \eqref{geometric lat}.
\end{proof}

\section{Acknowledgement}

I am very grateful to Prof. Landsberg for many inspiring
discussions, and unfailingly useful suggestions. Without his
guidance in respect of the geometric intuition, this article will
never be successfully completed. I am also very grateful to Prof.
Ottaniani to teach us the concept of tangential weak-defectivity and
to Prof. Oeding for very useful comments. By
email conversation, Prof. De Lathauwer  shared with me his
unpublished results on block component analysis and Prof. Comon
gave me a lot of reference on blind source separation. Words can
not express my gratitude to them.
\bibliographystyle{amsplain}

\begin{thebibliography}{99}
\bibitem{Comon 1} H. Becker, P. Comon, L. Albera, M. Haardt, I. Merlet, \emph{Multiway Space-Time-Wave-Vector Analysis for Source Localization and Extraction}, European Signal Processing Conference, Aalborg, Denmark 2010
\bibitem{Car}  E. Carlini,  J. Kleppe, \emph{Ranks derived from multilinear maps}, Journal of Pure and Applied Algebra, 215, (2005) 1999-2004
\bibitem{Ger}  M. Catalisano,  A. V. Geramita,  A. Gimigliano, \emph{Higher secant varieites of the Segre varieties $\mathbb{P}^{1}\times \mathbb{P}^{1}\times\cdots\times \mathbb{P}^{1}$}, Journal of Pure and Applied Algebra, (special volume in honour of W. Vasconcelos), 201, (2005) 367-380

\bibitem{Ger2}  M. Catalisano,  A. V. Geramita,  A. Gimigliano, \emph{ Tensor rank, secant varieties to Segre varieties, and fat points in multiprojective spaces}, Queen's Papers in Pure and Appl. Math. 119, (2000) 223-246

\bibitem{CC} L. Chiantini, C. Ciliberto, \emph{Weakly defective varieties}, Trans. Amer. Math. Soc. 354 (1), (2002) 151-178
\bibitem{CG} L. Chiantini, G. Ottaviani, \emph{On generic identifiability of 3-tensors of small
rank}, preprint, (2011)

\bibitem{gyo data} J. F. Claerbout, \emph{Fundamentals of Geophysical Data Processing: With Applications to Petroleum Prospecting}, McGraw-Hill, New York, 1976

\bibitem{C} P. Comon, \emph{Tensor Decompositions, State of the Art and Applications}, Mathematics in Signal Processing V (J. G. McWhirter and I. K. Proudler Ed.) Clarendon Press, Oxford, UK, (2002) 1-24
\bibitem{Comon} P. Comon, \emph{Blind techniques}, preprint, (2008)


\bibitem{Lat 1} L. De Lathauwer, \emph{Decompositions of a higher-order tensor in
block terms. I. Lemmas for partitioned matrices}, SIAM J. Matrix
Anal. Appl. 30 (3), (2008)  1022--1032

\bibitem{Lat 2} L. De Lathauwer, \emph{Decompositions of a higher-order tensor in block
terms. II. Definitions and uniqueness}, SIAM J. Matrix Anal. Appl.
30  (3), (2008) 1033--1066



\bibitem{Lat 5} L. De Lathauwer, \emph{Block Component Analysis, a New Concept for Blind Source SeparationTensor Decompositions, State of the Art and Applications}, in Proc. of the 10th International Conference on Latent Variable Analysis and Signal Separation (LVA ICA 2012), Tel-Aviv, Israel, (2012) 1-8
\bibitem{1LL} L. De Lathauwer, A. De Baynast, \emph{Blind deconvolution of DS-CDMA
Signals by means of decomposition in rank-(1,L,L) Terms}, IEEE
Transactions on Signal Processing 56 (4), (2008)
1562-1571
\bibitem{Lat 3} L. De Lathauwer, N. Dimitri, \emph{Decompositions of a higher-order
tensor in block terms. III. Alternating least squares algorithms},
SIAM J. Matrix Anal. Appl.  30 (3),  (2008) 1067--1083

\bibitem{Lat 4} L. De Lathauwer, N. Dimitri, \emph{Blind separation of exponential polynomials and
the decomposition of a tensor in rank-$(L_{r},L_{r},1)$ terms}, SIAM J. Matrix Anal. Appl. 32 (4), (2012) 1451-1474

\bibitem{Hart} R. Hartshorne, \emph{Algebraic Geometry,  GTM 52}, Springer-Verlag, New York, 1977
\bibitem{Comon 2} A. Kachenoura, L. Albera, L. Senhadji, P. Comon, \emph{ICA: A Potential Tool for BCI Systems}, IEEE Signal Processing Magazine 25 (1), (2008) 57-68
\bibitem{Tamara} T. G. Kolda,   B. W. Bader, \emph{Tensor Decompositions and Applications}, SIAM Rev. 51, (2009)  455-500
\bibitem{Kroonenberg} P. M. Kroonenberg, \emph{Applied Multiway Data Analysis (Wiley Series in Probability and Statistics)} Wiley-Interscience,  New York, 2008


\bibitem{Kruskal} J. B. Kruskal,\emph{ Three-way arrays: rank and uniqueness of trilinear decompositions,
with application to arithmetic complexity and statistics}, Linear Algebra and
Appl. 18 (2), (1977) 95-138
\bibitem{La} J. M. Landsberg, \emph{Tensors: Geometry and Applications, Graduate Studies in Mathematics 128}, AMS,
Providence, RI, 2011

\bibitem{LaWe} J. M. Landsberg, J. Weyman, \emph{On the ideals and singularities of secant varieties of Segre varieties}, Bull. Lond. Math. Soc. 39 (4), (2007) 685-697


\bibitem{ML} J. Morton and L.H. Lim, \emph{Principal cumulant component analysis, (extended abstract)}, preprint, (2009)


\bibitem{LA} G. Strang, \emph{Linear Algebra and Its Applications, Second Edition},  Wellesley-Cambridge Press,  Massachusetts, 1980




\bibitem{Ter} A. Terracini, \emph{Sulle $V_{k}$ per cui la varieta degli $S_{h}$, $(h +
1)$-seganti ha dimensione minore dellordinariot}, Rend. Circ. Mat. Palermo 31, (1911) 392-396


\bibitem{Weyman} J. Weyman, \emph{Cohomology of Vector Bundles and Syzygies}, Cambridge University Press, Cambridge, UK, 2003

\end{thebibliography}


\end{document}